\newtheorem{theorem}{Theorem}[section]
\newtheorem{lemma}{Lemma}[section]
\newtheorem{example}{Example}[section]
\begin{document}

	
		
		
		\title{Bivariate Bernstein Fractal Interpolation and Numerical Integration on Triangular Domains}

		\author{APARNA M. P.$^1$ \and P. PARAMANATHAN$^2$}
		\date{
			$^1$Department of Mathematics \\ Amrita School of Physical Sciences, Coimbatore \\Amrita Vishwa Vidyapeetham, India\\mp$\_$aparna@cb.students.amrita.edu\\%
			$^2$Department of Mathematics \\ Amrita School of Physical Sciences, Coimbatore \\Amrita Vishwa Vidyapeetham, India\\p$\_$paramanathan@cb.amrita.edu\\%
		}

		\maketitle

		\begin{abstract}
			The fundamental aim of this paper is to provide the approximation and numerical integration of a discrete set of data points with Bernstein fractal approach. Using Bernstein polynomials in the iterated function system, the paper initially proposes the numerical integration formula for the data set corresponding to univariate functions. The proposed formula of integration is shown to be convergent by examining the data sets of certain weierstrass functions.
			The paper then extends the Bernstein fractal approximation and numerical integration technique to two dimensional interpolating regions. Bernstein polynomials defined over triangular domain has been used for the purpose. The triangular domain has been partitioned and the newly generated points are assigned colors in a particular manner to maintain the chromatic number as 3. Following the above mentioned construction and approximation of bivariate Bernstein fractal interpolation functions, the paper introduces the numerical double integration formula using the constructed functions. The convergence of the double integration formula towards the actual integral value of the data sets is displayed with the help of some examples including the benchmark functions. Both the newly introduced iterated function systems are verified for their hyperbolicity and the resultant fractal interpolation functions are shown to be continuous. 
		\end{abstract}
		
		\noindent\textbf{Keywords:} {Bivariate fractal interpolation function (BFIF); Bernstein polynomial; Numerical double integration}
		
		\section{Introduction}
		Bernstein polynomials are being frequently employed today to discover the intricate mathematical representation of certain parametric curves, especially in computer graphics and related fields \cite{Gal}. These curves, also known as bezier curves, frequently display the peculiar characteristics of a fractal curve. The inherent irregularity of the fractal curves is easily captured with the Bernstein fractal approximation. Due to the Bernstein polynomials' ability to express complicated functions in terms of simple series, they find extensive use in statistics, differential equations and numerical analysis  \cite{Abdulkarim} , \cite{Doha}, \cite{Khardani}, \cite{Ahmed}, \cite{Costabile}.   Introduced by S. N. Bernstein, the theory of Bernstein polynomials has been further used to the modelling of nonlinear functions over compact sets \cite{Wen}.  The definition of generalized Bernstein polynomial and the quadrature formula using Bernstein polynomial, for equally spaced points, has been proposed in \cite{Occorsio}. Numerical differentiation technique with Bernstein polynomials is given in \cite{Costabile}.  Explicit formula for the integral of Bernstein polynomial is given in \cite{Doha}. Generalization of Bernstein polynomial approximation for the functions of several variables was carried out in \cite{Duchon}, \cite{Foupouagnigni}.\\
		
		It is always possible to find the continuous function that approximated a discrete set of data points. The closeness of the approximation varies in accordance with the irregularity of the data set involved. The application of fractal interpolation helps in minimizing the approximation error to an extend. The approximation through fractal method is better achieved by including suitable functions in the iterated function system.\\
		
		The concept of fractal interpolation functions with linear functions in the IFS was first proposed by Barnsley in 1986 \cite{Barnsley1986}. The nonlinear counter part of fractal interpolation have been carried out in \cite{Ri}, \cite{Kim} and \cite{Kobes}. The fractal interpolation to two dimension was first extended by \cite{Massopust}, where the data points on the interpolating domain were coplanar. Removing the coplanarity restriction, continuous fractal interpolation surfaces are introduced by \cite{Geronimo} with constant vertical scaling factor. Limiting the interpolation points to be collinear on the boundary of the rectangle, \cite{Dalla} considered another construction of continuous fractal interpolation surfaces. The iterated function system has been reformulated in \cite{Drakopoulos} to ensure the continuity of FIS. The authors in \cite{Ruan} added some additional conditions on the functions in the IFS to make the FIS continuous. The authors in \cite{Aparna} used vertex coloring to solve the problem of continuity. The present paper follows the approach done in \cite{Aparna}, to maintain the continuity of the FIS generated. \\
		
		The construction of fractal interpolation functions using Bernstein polynomials in the IFS is considered in \cite{Vijender}. The construction method is then extended to bivariate fractal interpolation functions over rectangular interpolating domains. \cite{Vijender} further discusses the convergence of the constructed Bernstein fractal interpolation function to the original data generating function, without imposing any condition on the vertical scaling factor. \\
		
		The initial objective of the present work is to define the numerical integration formula for the one dimensional data set, generated from univariate functions, using Bernstein polynomials in the IFS. The hyperbolicity of the IFS proposed in \cite{Vijender} has been verified and the obtained FIF is shown to be a continuous, interpolating function. The convergence of the numerical integration method is examined for the data set corresponding to some of the weierstrass functions, with their exact integral values. Secondly, the present work aims to construct bivariate Bernstein FIF for the data points defined over a triangular interpolating domain. The IFS consists of $m$th degree Bernstein polynomials. The interpolating domain is partitioned according to the partition scheme specified in \cite{Aparna}. The newly generated vertices are given colors such that the chromatic number of the partition is 3. The proposed IFS has been verified for its hyperbolicity and the continuity of the constructed FIF is proved. The construction of bivariate Bernstein FIF and the associated IFS with $m$th degree Bernstein polynomial have been illustrated in particular using the first and second degree Bernstein polynomials.  Following the construction, the paper intends to define numerical double integration formula using the newly introduced IFS. The integration formula has been further demonstrated specifically using the first and second degree Bernstein polynomials in the IFS. The derived method of integration is validated by comparing the computation results obtained with the actual integral values of the considered benchmark functions.\\		
		
		The structure of the paper is organized as follows:	Following the introduction, the second section of the paper deals with the basic definition and properties of Bernstein polynomials defined over an interval. The section also discusses the theory of barycentric coordinates, the coordinate system used for defining Bernstein polynomials over triangular regions. The third section reviews the construction of Bernstein affine FIF as proposed in \cite{Vijender} and verifies the hyperbolicity of the IFS and continuity of the FIF generated. The fourth section of the paper provides the newly formulated integration formula along with the computation results obtained. The formula of integration is verified in section five with some examples. The construction of bivariate Bernstein FIF is provided in the sixth section. The section discusses the hyperbolicity of the new IFS and the continuity of  the constructed FIF. The seventh section deals with the numerical double integration formula using bivariate Bernstein FIF. The formula of integration is shown to be accurate in the eighth section, by comparing the actual integral values of the test functions considered. By summarizing the observations, the paper concludes in section nine.

		\section{Bernstein Polynomials}
		\subsection{Bernstein Approximation of Functions Defined over Arbitrary Intervals}
		Consider the piecewise, linear interpolation function $f$ passing through the data points $\{(p_{i},q_{i}):i=1,2,...,N\}.$ Then, the $m$th  Bernstein polynomial approximation of $f$ is defined as 
		\begin{align}\label{bernGen}
			B_{m}(f,p) &= \frac{1}{(p_{N}-p_{1})^{m}}\sum_{v=0}^{m}\binom{m}{v}(p-p_{1})^{v}(p_{N}-p)^{m-v}f\Big(p_{1}+\frac{v(p_{N}-p_{1})}{m}\Big)
		\end{align}
		Then, it is easy to verify that 
		\begin{align}\label{EndbernGen}
			B_{m}(f,p_{1}) &=f(p_{1}), \,\, \text{and} \,\, B_{m}(f,p_{N})=f(p_{N}).
		\end{align}
		The first and the second degree Bernstein polynomial approximations for the function $f$ are respectively,
		\begin{align}\label{bern1}
			B_{1}(f,p) &= \frac{(q_{N}-q_{1})p}{p_{N}-p_{1}} + \frac{p_{N}q_{1}-p_{1}q_{N}}{p_{N}-p_{1}}
		\end{align}
		
		\begin{align}\label{bern2}
			B_{2}(f,p) &= c_{i}p^{2}+d_{i}p+e_{i}
		\end{align} where 
		\begin{align}\label{bern2Coef}
			\nonumber
			c_{i} &= \frac{q_{1}+q_{N}-2g(\frac{p_{1}+p_{N}}{2})}{(p_{N}-p_{1})^{2}}, \\ \nonumber d_{i} &= \frac{2(p_{1}+p_{N})g(\frac{p_{1}+p_{N}}{2})-2(p_{N}q_{1}+p_{1}q_{N})}{(p_{N}-p_{1})^{2}}, \\
			e_{i} &= \frac{p_{N}^{2}q_{1}+p_{1}^{2}q_{N}-2p_{1}p_{N}g(\frac{p_{1}+p_{N}}{2})}{(p_{N}-p_{1})^{2}}
		\end{align}

		\subsection{Bernstein Approximation of Functions Defined over Triangular Regions}
		Consider a non degenerate triangular domain $D$ with $v_{1}=(x_{1},y_{1}), v_{2}=(x_{2},y_{2}), \\ v_{3}=(x_{3},y_{3})$ where $x_{1}, x_{2}, x_{3} $ are the $x-$ coordinates of the vertices and $ y_{1}, y_{2}, y_{3} $ are the $y-$ coordinates in the cartesian coordinate system.
		Let $v=(x,y)$ be an arbitrary point in $D.$ The barycentric coordinate of $V$ with respect to $D$ is the 3-tuple $(\tau_{1},\tau_{2},\tau_{3})$ calculated as the solution of the system of linear equations 
		\begin{align*}
			v=\sum_{j=1}^{3}\tau_{j}v_{j} \,\, \text{and} \,\, \sum_{j=1}^{3} \tau_{j}=1.
		\end{align*}
		The expressions for $\tau_{1}, \tau_{2}, \tau_{3},$ after solving the system of equations, will be:
		\begin{align*}
			\tau_{1} &= \frac{\beta_{1}}{\delta}, \,\,
			\tau_{2} = \frac{\beta_{2}}{\delta}, \,\,
			\tau_{3} = \frac{\beta_{3}}{\delta} 
		\end{align*} 
		where 
		\begin{align*}
			\beta_{1} &=  \begin{vmatrix}
				x & x_{2} & x_{3}\\ 
				y & y_{2} & y_{3}\\
				1 & 1 & 1
			\end{vmatrix}, \,\, 
			\beta_{2} =  \begin{vmatrix}
				x_{1} & x & x_{3}\\ 
				y_{1} & y & y_{3}\\
				1 & 1 & 1 
			\end{vmatrix}, \,\,
			\beta_{3} =  \begin{vmatrix}
				x_{1} & x_{2} & x\\ 
				y_{1} & y_{2} & y\\ 
				1 & 1 & 1\\  
			\end{vmatrix}, \,\,
			\delta =  \begin{vmatrix}
				x_{1} & x_{2} & x_{3}\\  
				y_{1} & y_{2} & y_{3}\\  
				1 & 1 & 1
			\end{vmatrix}.
		\end{align*} It is to be noted that the barycentric coordinate of the vertices $v_{1}, v_{2},v_{3}$ with respect to $D$ are $(1,0,0), (0,1,0)$ and $(0,0,1)$ respectively.\\
		For a continuous function $f$ defined over the triangular region $D,$ the $m$th  Bernstein approximation is given by:
		\begin{align}
			B_{m}(f,v) &= \sum_{i+j+k=m} f_{i,j,k} J_{i,j,k}^{m}(v) 
		\end{align}
		where $v=(x,y)$ is an arbitrary point in $D$ with barycentric coordinate $(\tau_{1},\tau_{2},\tau_{3})$
		\begin{align*}
			f_{i,j,k} &= f\Big(\frac{i}{m}, \frac{j}{m}, \frac{k}{m}\Big) \,\, \text{and} \,\, J_{i,j,k}^{m} = \frac{m!}{i!j!k!}\tau_{1}^{i}\tau_{2}^{j}\tau_{3}^{k}
		\end{align*}
		
		If the triangle $D$ is partitioned into $N$ number of subtriangles $D_{n}$ and let $h$ be the piecewise, linear interpolation function for the data set $\{(x_{nj},y_{nj},z_{nj}):j=1,2,3 \,\,\text{and}\,\, n=1,2,...,N\}$ where $(x_{nj},y_{nj})$ denotes the cartesian coordinates of the vertices in the subtriangles and $z_{nj}$ is the value of the function at these vertices. Similar to the single variable approximation, let $B_{m}(h,x,y)$ be the $m$th  Bernstein polynomial approximation for the piecewise linear interpolation function $h.$ Then, 
		\begin{align}\label{EndB2}
			B_{m}(h,x_{j},y_{j}) &= h(x_{j},y_{j})
		\end{align} where $(x_{j},y_{j}), \,\, j=1,2,3$ are the vertices of the triangular region $D.$ \\
		When $m=1,$ the first degree Bernstein polynomial approximation of the function $h$ is
		\begin{align}\label{Bern1}
			B_{1}(h,x,y) &= E_{n}x+G_{n}y+H_{n}
		\end{align}
		with 
		\begin{align*}
			E_{n} &= \frac{z_{n1}(y_{n2}-y_{n3})+z_{n2}(y_{n3}-y_{n1})+z_{n3}(y_{n1}-y_{n2})}{x_{n1}(y_{n2}-y_{n3})-x_{n2}(y_{n1}-y_{n3})+x_{n3}(y_{n1}-y_{n2})}
		\end{align*}
		
		\begin{align*}
			G_{n} &= \frac{z_{n1}(x_{n3}-x_{n2})+z_{n2}(x_{n1}-x_{n3})+z_{n3}(x_{n2}-x_{n1})}{x_{n1}(y_{n2}-y_{n3})-x_{n2}(y_{n1}-y_{n3})+x_{n3}(y_{n1}-y_{n2})}
		\end{align*}
		
		\begin{align*}
			H_{n} &= \frac{z_{n1}(x_{n2}y_{n3}-x_{n3}y_{n2})+z_{n2}(x_{n3}y_{n1}-x_{n1}y_{n3})+z_{n3}(x_{n1}y_{n2}-x_{n2}y_{n1})}{x_{n1}(y_{n2}-y_{n3})-x_{n2}(y_{n1}-y_{n3})+x_{n3}(y_{n1}-y_{n2})}
		\end{align*}
		
		Similarly the second degree Bernstein polynomial approximation will be 
		\begin{align}\label{Bern2}
			B_{2}(h,x,y) &= K_{n}x^{2}+M_{n}y^{2}+O_{n}xy+P_{n}x+T_{n}y+U_{n}
		\end{align}
		where 
		\begin{align*}
			K_{n} &= \frac{z_{n1}(y_{n2}-y_{n3})^{2}+ z_{n2}(y_{n3}-y_{n1})^{2}+ z_{n3}(y_{n1}-y_{n2})^{2}}{(x_{n1}(y_{n2}-y_{n3})-x_{n2}(y_{n1}-y_{n3})+x_{n3}(y_{n1}-y_{n3}))^{2}} \\ &+  \frac{m_{1}(y_{n2}-y_{n3})(y_{n3}-y_{n1})+ m_{2}(y_{n2}-y_{n3})(y_{n1}-y_{n2})+  m_{3}(y_{n3}-y_{n1})(y_{n1}-y_{n2}) }{(x_{n1}(y_{n2}-y_{n3})-x_{n2}(y_{n1}-y_{n3})+x_{n3}(y_{n1}-y_{n3}))^{2}}
		\end{align*}
		
		\begin{align*}
			M_{n} &= \frac{z_{n1}(x_{n3}-x_{n2})^{2}+ z_{n2}(x_{n1}-x_{n3})^{2}+ z_{n3}(x_{n2}-x_{n1})^{2}}{(x_{n1}(y_{n2}-y_{n3})-x_{n2}(y_{n1}-y_{n3})+x_{n3}(y_{n1}-y_{n3}))^{2}} \\ &+ \frac{m_{1}(x_{n3}-x_{n2})(x_{n1}-x_{n3})+ m_{2}(x_{n3}-x_{n2})(x_{n2}-x_{n1})+  m_{3}(x_{n1}-x_{n3})(x_{n2}-x_{n1}) }{(x_{n1}(y_{n2}-y_{n3})-x_{n2}(y_{n1}-y_{n3})+x_{n3}(y_{n1}-y_{n3}))^{2}}
		\end{align*}
		
		\begin{align*}
			O_{n} &= \frac{2z_{n1}(y_{n2}-y_{n3})(x_{n3}-x_{n2})+ 2z_{n2}(y_{n3}-y_{n1})(x_{n1}-x_{n3})+ 2z_{n3}(y_{n1}-y_{n2})(x_{n2}-x_{n1})}{(x_{n1}(y_{n2}-y_{n3})-x_{n2}(y_{n1}-y_{n3})+x_{n3}(y_{n1}-y_{n3}))^{2}} \\ &+
			\frac{m_{1}\big((x_{n3}-x_{n2})(y_{n3}-y_{n1})+(y_{n2}-y_{n3})(x_{n1}-x_{n3})\big)}{(x_{n1}(y_{n2}-y_{n3})-x_{n2}(y_{n1}-y_{n3})+x_{n3}(y_{n1}-y_{n3}))^{2}} \\ &+ \frac{m_{2}\big((x_{n2}-x_{n1})(y_{n2}-y_{n3})+(y_{n1}-y_{n2})(x_{n3}-x_{n2})\big)}{(x_{n1}(y_{n2}-y_{n3})-x_{n2}(y_{n1}-y_{n3})+x_{n3}(y_{n1}-y_{n3}))^{2}} \\ &+ \frac{m_{3}\big((x_{n2}-x_{n1})(y_{n3}-y_{n1})+(y_{n1}-y_{n2})(x_{n1}-x_{n3})\big)}{(x_{n1}(y_{n2}-y_{n3})-x_{n2}(y_{n1}-y_{n3})+x_{n3}(y_{n1}-y_{n3}))^{2}} 
		\end{align*}
		
		\begin{align*}
			P_{n} &=\frac{2z_{n1}(y_{n2}-y_{n3})\big(y_{n3}x_{n2}-x_{n3}y_{n2}\big)}{(x_{n1}(y_{n2}-y_{n3})-x_{n2}(y_{n1}-y_{n3})+x_{n3}(y_{n1}-y_{n3}))^{2}} \\ &+
			\frac{2z_{n2}(y_{n3}-y_{n1})\big(y_{n1}x_{n3}-x_{n1}y_{n3}\big)}{(x_{n1}(y_{n2}-y_{n3})-x_{n2}(y_{n1}-y_{n3})+x_{n3}(y_{n1}-y_{n3}))^{2}} \\ &+
			\frac{2z_{n3}(y_{n1}-y_{n2})\big(y_{n2}x_{n1}-x_{n2}y_{n1}\big)}{(x_{n1}(y_{n2}-y_{n3})-x_{n2}(y_{n1}-y_{n3})+x_{n3}(y_{n1}-y_{n3}))^{2}} \\ &+
			\frac{m_{1}(y_{n2}-y_{n3})(y_{n1}x_{n3}-y_{n3}x_{n1})}{(x_{n1}(y_{n2}-y_{n3})-x_{n2}(y_{n1}-y_{n3})+x_{n3}(y_{n1}-y_{n3}))^{2}} \\ &+
			\frac{m_{1}(y_{n3}-y_{n1})(y_{n3}x_{n2}-y_{n2}x_{n3})}{(x_{n1}(y_{n2}-y_{n3})-x_{n2}(y_{n1}-y_{n3})+x_{n3}(y_{n1}-y_{n3}))^{2}} \\ &+
			\frac{m_{2}(y_{n2}-y_{n3})(y_{n2}x_{n1}-y_{n1}x_{n2})}{(x_{n1}(y_{n2}-y_{n3})-x_{n2}(y_{n1}-y_{n3})+x_{n3}(y_{n1}-y_{n3}))^{2}} \\ &+
			\frac{m_{2}(y_{n1}-y_{n2})(y_{n3}x_{n2}-y_{n2}x_{n3})}{(x_{n1}(y_{n2}-y_{n3})-x_{n2}(y_{n1}-y_{n3})+x_{n3}(y_{n1}-y_{n3}))^{2}} \\ &+
			\frac{m_{3}(y_{n3}-y_{n1})(y_{n2}x_{n1}-y_{n1}x_{n2})}{(x_{n1}(y_{n2}-y_{n3})-x_{n2}(y_{n1}-y_{n3})+x_{n3}(y_{n1}-y_{n3}))^{2}} \\ &+
			\frac{m_{3}(y_{n1}-y_{n2})(y_{n1}x_{n3}-y_{n3}x_{n1})}{(x_{n1}(y_{n2}-y_{n3})-x_{n2}(y_{n1}-y_{n3})+x_{n3}(y_{n1}-y_{n3}))^{2}}	
		\end{align*}
		
		\begin{align*}
			T_{n} &=\frac{2z_{n1}(x_{n3}-x_{n2})\big(y_{n3}x_{n2}-x_{n3}y_{n2}\big)}{(x_{n1}(y_{n2}-y_{n3})-x_{n2}(y_{n1}-y_{n3})+x_{n3}(y_{n1}-y_{n3}))^{2}} \\ &+
			\frac{2z_{n2}(x_{n1}-x_{n3})\big(y_{n1}x_{n3}-x_{n1}y_{n3}\big)}{(x_{n1}(y_{n2}-y_{n3})-x_{n2}(y_{n1}-y_{n3})+x_{n3}(y_{n1}-y_{n3}))^{2}} \\ &+
			\frac{2z_{n3}(x_{n2}-x_{n1})\big(y_{n2}x_{n1}-x_{n2}y_{n1}\big)}{(x_{n1}(y_{n2}-y_{n3})-x_{n2}(y_{n1}-y_{n3})+x_{n3}(y_{n1}-y_{n3}))^{2}} \\ &+
			\frac{m_{1}(x_{n3}-x_{n2})(y_{n1}x_{n3}-y_{n3}x_{n1})}{(x_{n1}(y_{n2}-y_{n3})-x_{n2}(y_{n1}-y_{n3})+x_{n3}(y_{n1}-y_{n3}))^{2}} \\ &+
			\frac{m_{1}(x_{n1}-x_{n3})(y_{n3}x_{n2}-y_{n2}x_{n3})}{(x_{n1}(y_{n2}-y_{n3})-x_{n2}(y_{n1}-y_{n3})+x_{n3}(y_{n1}-y_{n3}))^{2}} \\ &+
			\frac{m_{2}(x_{n3}-x_{n2})(y_{n2}x_{n1}-y_{n1}x_{n2})}{(x_{n1}(y_{n2}-y_{n3})-x_{n2}(y_{n1}-y_{n3})+x_{n3}(y_{n1}-y_{n3}))^{2}} \\ &+
			\frac{m_{2}(x_{n2}-x_{n1})(y_{n3}x_{n2}-y_{n2}x_{n3})}{(x_{n1}(y_{n2}-y_{n3})-x_{n2}(y_{n1}-y_{n3})+x_{n3}(y_{n1}-y_{n3}))^{2}} \\ &+
			\frac{m_{3}(x_{n1}-x_{n3})(y_{n2}x_{n1}-y_{n1}x_{n2})}{(x_{n1}(y_{n2}-y_{n3})-x_{n2}(y_{n1}-y_{n3})+x_{n3}(y_{n1}-y_{n3}))^{2}} \\ &+
			\frac{m_{3}(x_{n2}-x_{n1})(y_{n1}x_{n3}-y_{n3}x_{n1})}{(x_{n1}(y_{n2}-y_{n3})-x_{n2}(y_{n1}-y_{n3})+x_{n3}(y_{n1}-y_{n3}))^{2}}	
		\end{align*}
		
		\begin{align*}
			U_{n} &=\frac{z_{n1}(x_{n2}^{2}y_{n3}^{2}+x_{n3}^{2}y_{n2}^{2})+z_{n2}(x_{n3}^{2}y_{n1}^{2}+x_{n1}^{2}y_{n3}^{2})+z_{n3}(x_{n1}^{2}y_{n2}^{2}+x_{n2}^{2}y_{n1}^{2})}{(x_{n1}(y_{n2}-y_{n3})-x_{n2}(y_{n1}-y_{n3})+x_{n3}(y_{n1}-y_{n3}))^{2}} \\ &-
			\frac{2z_{n1}x_{n2}y_{n3}x_{n3}y_{n2}-2z_{n2}x_{n3}y_{n1}x_{n1}y_{n3}-2z_{n3}x_{n1}y_{n2}x_{n2}y_{n1}}{(x_{n1}(y_{n2}-y_{n3})-x_{n2}(y_{n1}-y_{n3})+x_{n3}(y_{n1}-y_{n3}))^{2}} \\ &+
			\frac{m_{1}\big(x_{n2}y_{n3}x_{n3}y_{n1}-x_{n2}x_{n1}y_{n3}^{2}-y_{n2}y_{n1}x_{n3}^{2}+x_{n3}y_{n2}x_{n1}y_{n3}\big)}{(x_{n1}(y_{n2}-y_{n3})-x_{n2}(y_{n1}-y_{n3})+x_{n3}(y_{n1}-y_{n3}))^{2}} \\ &+	
			\frac{m_{2}\big(x_{n2}y_{n3}x_{n1}y_{n2}-y_{n3}y_{n1}x_{n2}^{2}-x_{n3}x_{n1}y_{n2}^{2}+x_{n3}y_{n2}x_{n2}y_{n1}\big)}{(x_{n1}(y_{n2}-y_{n3})-x_{n2}(y_{n1}-y_{n3})+x_{n3}(y_{n1}-y_{n3}))^{2}} \\ &+	
			\frac{m_{3}\big(x_{n3}y_{n1}x_{n1}y_{n2}-x_{n2}x_{n3}y_{n1}^{2}-y_{n2}y_{n3}x_{n1}^{2}+x_{n1}y_{n3}x_{n2}y_{n1}\big)}{(x_{n1}(y_{n2}-y_{n3})-x_{n2}(y_{n1}-y_{n3})+x_{n3}(y_{n1}-y_{n3}))^{2}}
		\end{align*}

		\section{Construction of Univariate Fractal Interpolation Function with Bernstein Polynomial in the Iterated Function System}
		Consider the data set 
		\begin{align}\label{data1}
			\{(p_{i}, q_{i}):i=1,2,...,N, \,\, N>1\}
		\end{align} where the input arguments are ordered as $p_{1} < p_{2} < ... < p_{N}$ and the output arguments are such that $q_{i}=\psi(p_{i}), $ for $i=1,2,...,N.$ Consider the $N-1$ subintervals $I_{i}=[p_{i},p_{i+1}]$ of $I=[p_{1}, p_{N}]$  and define contractive mappings $S_{i}:I \rightarrow I_{i}$ such that 
		\begin{align}\label{EndSi}
			S_{i}(p_{1})&= p_{i}, \,\, S_{i}(p_{N})= p_{i+1}, \,\, \text{for} \,\, i=1,2,...,N-1.
		\end{align} The function $S_{i}$ satisfying this endpoint condition is given by 
		\begin{align}\label{Si}
			S_{i}(p)&=a_{i}p+b_{i}.
		\end{align}
		Set $\alpha_{i}$ as the freely chosen vertical scaling factor whose value lies in between -1 and 1, for $i=1,2,...,N-1.$ 
		Define another function $Q_{i}:I \times \mathbf{R} \rightarrow \mathbf{R},$ contractive in the second variable, such that  	
		\begin{align}\label{Qi}
			Q_{i}(p,q) &=\alpha_{i}q+V_{i}(p)
		\end{align} where $V_{i}:I \rightarrow \mathbf{R}$ defined by $$V_{i}(p)=g\circ S_{i}(p)-\alpha_{i}B_{m}(g,p)$$ for $i=1,2,...,N-1,$ $m\in \mathbf{N}.$ Note that the function $g$ is a piecewise, linear interpolation function for the data set $\{(p_{i},q_{i}):i=1,...,N\}.$ 
		Using \eqref{EndB2}, it is easier to verify that the function $Q_{i}$ is characterized by the following endpoint condition 
		\begin{align}\label{EndQi}
			Q_{i}(p_{1},q_{1}) &= q_{i}, \,\, Q_{i}(p_{N},q_{N}) = q_{i+1}, 
		\end{align} for $i=1,2,...,N-1.$ 
		The IFS will be then
		\begin{align}\label{IFSbernGen}
			w_{i}(p,q) &= (S_{i}(p),Q_{i}(p,q)) \\
			\nonumber
			&= (a_{i}p+b_{i}, \alpha_{i}q+g\circ S_{i}(p)-\alpha_{i}B_{m}(g,p)) \\
			\nonumber
			&= (a_{i}p+b_{i}, \alpha_{i}q+A_{i}p+B_{i}-\alpha_{i}B_{m}(g,p))
		\end{align} for $i=1,2,...,N-1.$
		
		
		\subsection{Iterated Function System with First Degree Bernstein Polynomial}
		When $m=1$ in \eqref{IFSbernGen}, the matrix form of the corresponding IFS becomes: 
		
		\begin{align}\label{IFSB1matrix}
			wb1_{i}(p,q)= \begin{bmatrix}
				a_{i} &  0 \\
				A_{i} & \alpha_{i}  \\
			\end{bmatrix}
			\begin{bmatrix}
				p \\q
			\end{bmatrix} +
			\begin{bmatrix}
				b_{i}\\ B_{i}-\alpha_{i}\Big (\frac{(q_{N}-q_{1})p}{p_{N}-p_{1}}+\frac{p_{N}q_{1}-p_{1}q_{N}}{p_{N}-p_{1}}\Big)
			\end{bmatrix}
		\end{align} for $i=1,2,...,N-1.$

		\subsection{Iterated Function System with Second Degree Bernstein Polynomial}
		
		Putting $m=2$ in \eqref{IFSbernGen}, the following IFS will be generated.
		\begin{align}\label{IFSB2matrix}
			wb2_{i}(p,q)= \begin{bmatrix}
				a_{i} &  0 \\
				A_{i} & \alpha_{i}  \\
			\end{bmatrix}
			\begin{bmatrix}
				p \\q
			\end{bmatrix} +
			\begin{bmatrix}
				b_{i}\\ B_{i}-\alpha_{i}\Big (c_{i}p^{2}+d_{i}p+e_{i}\Big)
			\end{bmatrix}
		\end{align} 
		where $c_{i}, \,\, d_{i} $ and $e_{i}$ are given as in \eqref{bern2Coef}, for $i=1,2,...,N-1.$

		\begin{lemma}
			Consider the data set \eqref{data1} with the IFS defined in \eqref{IFSB1matrix}, where the vertical scaling factor chosen in between 0 and 1. Then, the IFS \eqref{IFSB1matrix} is hyperbolic with respect to the metric $\sigma_{1}$ defined by $$\sigma_{1}((p_{1},q_{1}),(p_{2},q_{2})) =  |p_{1}-p_{2}|+\theta_{1}|q_{1}-q_{2}|, \,\, \text{where} \,\, $$
			$$\theta_{1}=\frac{min\{1-|a_{i}|:i=1,2...,N-1\}-\epsilon_{1}}{max\{|A_{i}|+|\alpha_{i}||\frac{q_{N}-q_{1}}{p_{N}-p_{1}}|:i=1,2,...,N-1\}}, \,\, \text{for} \,\, \epsilon_{1}>0.$$
		\end{lemma}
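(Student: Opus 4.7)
The plan is to show directly that each affine map $wb1_i$ in \eqref{IFSB1matrix} is a contraction on $(\mathbf{R}^2,\sigma_1)$; since the union of finitely many contractions is hyperbolic (and $\sigma_1$ is clearly a metric, equivalent to the Euclidean one because $\theta_1>0$), this is sufficient. First I would fix two points $(\bar p,\bar q),(\tilde p,\tilde q)\in\mathbf{R}^2$ and expand
\begin{align*}
\sigma_1\bigl(wb1_i(\bar p,\bar q),wb1_i(\tilde p,\tilde q)\bigr)
&=\bigl|S_i(\bar p)-S_i(\tilde p)\bigr|+\theta_1\bigl|Q_i(\bar p,\bar q)-Q_i(\tilde p,\tilde q)\bigr|,
\end{align*}
and substitute the explicit formulas for $S_i$ and $Q_i$ coming from \eqref{IFSB1matrix}.

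The first term collapses to $|a_i|\,|\bar p-\tilde p|$. For the second term, using \eqref{bern1} the map $B_1(g,\cdot)$ is affine with slope $(q_N-q_1)/(p_N-p_1)$, so the triangle inequality gives
\begin{align*}
\bigl|Q_i(\bar p,\bar q)-Q_i(\tilde p,\tilde q)\bigr|
\le |\alpha_i|\,|\bar q-\tilde q|+\Bigl(|A_i|+|\alpha_i|\Bigl|\tfrac{q_N-q_1}{p_N-p_1}\Bigr|\Bigr)|\bar p-\tilde p|.
\end{align*}
Collecting the coefficients of $|\bar p-\tilde p|$ and of $|\bar q-\tilde q|$, I would get
\begin{align*}
\sigma_1(wb1_i(\bar p,\bar q),wb1_i(\tilde p,\tilde q))
\le\Bigl[|a_i|+\theta_1\Bigl(|A_i|+|\alpha_i|\Bigl|\tfrac{q_N-q_1}{p_N-p_1}\Bigr|\Bigr)\Bigr]|\bar p-\tilde p|+|\alpha_i|\cdot\theta_1|\bar q-\tilde q|.
\end{align*}

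The key step is then to show that both bracketed factors are bounded above by a common constant $c<1$. For the $q$-coefficient, $|\alpha_i|<1$ by hypothesis, so one can take $c_2=\max_i|\alpha_i|<1$. For the $p$-coefficient, I would plug in the prescribed $\theta_1$: since $|A_i|+|\alpha_i||\frac{q_N-q_1}{p_N-p_1}|$ is bounded above by the maximum appearing in the denominator of $\theta_1$, the product $\theta_1(|A_i|+|\alpha_i||\frac{q_N-q_1}{p_N-p_1}|)$ is bounded by $\min_j\{1-|a_j|\}-\epsilon_1\le 1-|a_i|-\epsilon_1$, so the $p$-coefficient is at most $1-\epsilon_1<1$. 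Taking $c=\max\{1-\epsilon_1,\,c_2\}<1$ then yields
$$\sigma_1(wb1_i(\bar p,\bar q),wb1_i(\tilde p,\tilde q))\le c\,\sigma_1((\bar p,\bar q),(\tilde p,\tilde q)),$$
so each $wb1_i$ is a contraction and the IFS is hyperbolic.

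There is no serious obstacle here: the argument is a direct calculation, and the delicate point is merely bookkeeping — verifying that the specific weight $\theta_1$ was chosen precisely so that the cross-term $\theta_1(|A_i|+|\alpha_i||\frac{q_N-q_1}{p_N-p_1}|)$, when added to $|a_i|$, still falls below $1$. The role of the positive slack $\epsilon_1$ is to make this contraction strict and uniform in $i$; without it, the estimate would only give $\le 1$ at the worst index, which is insufficient for Banach-type hyperbolicity. I would finish by remarking that equivalence of $\sigma_1$ to the Euclidean metric ensures completeness of $(\mathbf{R}^2,\sigma_1)$, so the standard Hutchinson machinery applies.
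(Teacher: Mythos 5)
Your proposal follows essentially the same route as the paper's proof: expand $\sigma_1$ of the images, use the triangle inequality to collect coefficients of $|\bar p-\tilde p|$ and $|\bar q-\tilde q|$, and observe that the choice of $\theta_1$ forces both coefficients below $1$. In fact you make explicit the one step the paper merely asserts (that the contraction ratio $r$ satisfies $0<r<1$, which you derive from the definition of $\theta_1$ via the bound $|a_i|+\theta_1(|A_i|+|\alpha_i||\tfrac{q_N-q_1}{p_N-p_1}|)\le 1-\epsilon_1$), and you correctly add the remark about completeness of $(\mathbf{R}^2,\sigma_1)$ needed to invoke the Hutchinson framework.
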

		
		\begin{proof}
			$\sigma_{1}\big(wb1_{i}(p_{1},q_{1}), wb1_{i}(p_{2},q_{2})\big)$
			\begin{align}
				\nonumber
				=\sigma_{1}\Bigg(\bigg(a_{i}p_{1}+b_{i}, \alpha_{i}q_{1}+A_{i}p_{1}+B_{i}-\alpha_{i}\Big(\big(\frac{q_{N}-q_{1}}{p_{N}-p_{1}}\big)p_{1}+\frac{p_{N}q_{1}-p_{1}q_{N}}{p_{N}-p_{1}}\Big)\bigg), \\ \nonumber \bigg(a_{i}p_{2}+b_{i}, \alpha_{i}q_{2}+A_{i}p_{2}+B_{i}-\alpha_{i}\Big(\big(\frac{q_{N}-q_{1}}{p_{N}-p_{1}}\big)p_{2}+\frac{p_{N}q_{1}-p_{1}q_{N}}{p_{N}-p_{1}}\Big)\bigg)\Bigg)\\ \nonumber
				\leq \bigg(|a_{i}|+\theta_{1}\Big(|A_{1}|+|\alpha_{i}|\bigg|\frac{q_{N}-q_{1}}{p_{N}-p_{1}}\bigg|\Big)\bigg)|p_{1}-p_{2}|+\theta_{1}|\alpha_{i}||q_{1}-q_{2}|
			\end{align}
			Now, choose $r_{i}=max\bigg\{|a_{i}|+\theta_{1}\Big(|A_{i}|+|\alpha_{i}|\big|\frac{q_{N}-q_{1}}{p_{N}-p_{1}}\big|\Big), |\alpha_{i}|\bigg\}$ and\\ $r=max\{r_{i}:i=1,2,...,N-1\}.$ Then, $0<r<1$ and the above expression becomes \\
			$	\bigg(|a_{i}|+\theta_{1}\Big(|A_{1}|+|\alpha_{i}|\bigg|\frac{q_{N}-q_{1}}{p_{N}-p_{1}}\bigg|\Big)\bigg)|p_{1}-p_{2}|+\theta_{1}|\alpha_{i}||q_{1}-q_{2}|$
			\begin{align*}
				&\leq r(|p_{1}-p_{2}|+\theta_{1}|q_{1}-q_{2}|) \\
				&=r \sigma_{1}((p_{1},q_{1}),(p_{2},q_{2})),
			\end{align*} which implies the IFS \eqref{IFSB1matrix} is hyperbolic.
		\end{proof}

		\begin{lemma}
			The IFS defined in \eqref{IFSB2matrix} for the data set \eqref{data1} is hyperbolic with respect to the metric $\sigma_{2}$ defined by 
			$$\sigma_{2}((p_{1},q_{1}),(p_{2},q_{2}))= |p_{1}-p_{2}|+\theta_{2}|q_{1}-q_{2}|, \,\, \text{where}$$
			$$\theta_{2}=\frac{min\{1-|a_{i}|:i=1,2...,N-1\}-\epsilon_{2}}{max\{|A_{i}|+|\alpha_{i}||d_{1}|+2|\alpha_{i}||c_{i}||p_{N}|:i=1,2,...,N-1\}}, \,\, \text{for} \,\, \epsilon_{2}>0$$ as long as the vertical scaling factor $\alpha_{i}$ lies in between -1 and 1.
		\end{lemma}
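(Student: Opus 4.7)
The plan is to mirror the argument used for the first-degree case, with the extra complication coming from the quadratic term $c_i p^2$ in the second coordinate of $wb2_i$. First, I would write out $\sigma_2\bigl(wb2_i(p_1,q_1),\,wb2_i(p_2,q_2)\bigr)$ directly from the definition of $\sigma_2$ and the matrix form \eqref{IFSB2matrix}. The first coordinate contributes $|a_i|\,|p_1-p_2|$ immediately. For the second coordinate, after the cancellations of $b_i, B_i, e_i$, one obtains
\begin{align*}
\bigl|\alpha_i(q_1-q_2)+A_i(p_1-p_2)-\alpha_i c_i(p_1^2-p_2^2)-\alpha_i d_i(p_1-p_2)\bigr|.
\end{align*}

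Next, I would apply the triangle inequality and factor $p_1^2-p_2^2=(p_1+p_2)(p_1-p_2)$. Since the IFS acts on $I=[p_1,p_N]$, every admissible $p$ satisfies $|p|\le |p_N|$ (taking $|p_N|$ to mean the larger of the two endpoint magnitudes, as the authors implicitly do), so $|p_1+p_2|\le 2|p_N|$. This gives
\begin{align*}
\bigl|\text{second coord. difference}\bigr|\le |\alpha_i|\,|q_1-q_2|+\bigl(|A_i|+|\alpha_i|\,|d_i|+2|\alpha_i|\,|c_i|\,|p_N|\bigr)|p_1-p_2|.
\end{align*}
Combining with the first coordinate and multiplying the $q$-part by $\theta_2$ as required by $\sigma_2$, I obtain
\begin{align*}
\sigma_2\bigl(wb2_i(p_1,q_1),wb2_i(p_2,q_2)\bigr)\le\bigl(|a_i|+\theta_2(|A_i|+|\alpha_i|\,|d_i|+2|\alpha_i|\,|c_i|\,|p_N|)\bigr)|p_1-p_2|+\theta_2|\alpha_i|\,|q_1-q_2|.
\end{align*}

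Then, exactly as in the previous lemma, I would set
\begin{align*}
r_i=\max\Bigl\{|a_i|+\theta_2\bigl(|A_i|+|\alpha_i|\,|d_i|+2|\alpha_i|\,|c_i|\,|p_N|\bigr),\;|\alpha_i|\Bigr\},
\end{align*}
and $r=\max_i r_i$, so that the right-hand side is bounded by $r\,\sigma_2((p_1,q_1),(p_2,q_2))$. The hypothesis $|\alpha_i|<1$ handles the second entry of the maximum. For the first entry, substituting the stipulated value of $\theta_2$ yields
\begin{align*}
|a_i|+\theta_2\bigl(|A_i|+|\alpha_i|\,|d_i|+2|\alpha_i|\,|c_i|\,|p_N|\bigr)\le |a_i|+\bigl(\min_j(1-|a_j|)-\epsilon_2\bigr)\le 1-\epsilon_2<1,
\end{align*}
so $0<r<1$ and contractivity (hence hyperbolicity) follows.

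The main obstacle I anticipate is handling the quadratic term cleanly: one must bound $|p_1+p_2|$ uniformly on $I$, which forces the factor $2|p_N|$ to appear in the denominator of $\theta_2$. Everything else is a direct adaptation of the first-degree argument, with $d_i$ and the new $c_i$-contribution playing the role previously played by the linear slope $(q_N-q_1)/(p_N-p_1)$; the precise choice of $\theta_2$ is engineered exactly so that the coefficient of $|p_1-p_2|$ collapses to at most $1-\epsilon_2$.
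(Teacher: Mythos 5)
Your proposal is correct and follows essentially the same route as the paper: the same expansion of $\sigma_{2}$, the same bound with coefficient $|a_{i}|+\theta_{2}\bigl(|A_{i}|+|\alpha_{i}||d_{i}|+2|\alpha_{i}||c_{i}||p_{N}|\bigr)$ on $|p_{1}-p_{2}|$, and the same maximum-over-$i$ contraction ratio. In fact you supply two details the paper glosses over --- the explicit factorization $p_{1}^{2}-p_{2}^{2}=(p_{1}+p_{2})(p_{1}-p_{2})$ with the bound $|p_{1}+p_{2}|\le 2|p_{N}|$, and the substitution of $\theta_{2}$ to verify the ratio is at most $1-\epsilon_{2}<1$ --- so no changes are needed.
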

		
		\begin{proof}
			$\sigma_{2}\big(wb2_{i}(p_{1},q_{1}), wb2_{i}(p_{2},q_{2})\big)$
			\begin{align*}
				=\sigma_{2}\Bigg(\bigg(a_{i}p_{1}+b_{i}, \alpha_{i}q_{1}+A_{i}p_{1}+B_{i}-\alpha_{i}\Big(c_{i}p_{1}^{2}+d_{i}p_{1}+d_{i}e_{i}\Big)\bigg),\\
				\bigg(a_{i}p_{2}+b_{i}, \alpha_{i}q_{2}+A_{i}p_{2}+B_{i}-\alpha_{i}\Big(c_{i}p_{2}^{2}+d_{i}p_{2}+d_{i}e_{i}\Big)\bigg)\Bigg) \\
				\leq \bigg(|a_{i}|+\theta_{2}\Big(|A_{i}|+|\alpha_{i}||d_{i}|+2|\alpha_{i}||c_{i}||p_{N}|\Big)\bigg)|p_{1}-p_{2}|+\theta_{2}|\alpha_{i}||q_{1}-q_{2}|
			\end{align*}
			Now, choose $t_{i}=max\bigg\{|a_{i}|+\theta_{2}\Big(|A_{i}|+|\alpha_{i}||d_{i}|+2|d_{i}||c_{i}||p_{N}|\Big)\bigg\}$ and \\$t=max\{t_{i}:i=1,2,...,N-1\}.$ Then, $0<t<1$ and the above expression becomes\\
			$\bigg(|a_{i}|+\theta_{2}\Big(|A_{i}|+|\alpha_{i}||d_{i}|+2|\alpha_{i}||c_{i}||p_{N}|\Big)\bigg)|p_{1}-p_{2}|+\theta_{2}|\alpha_{i}||q_{1}-q_{2}|$
			\begin{align*}
				&\leq t(|p_{1}-p_{2}|+\theta_{2}|q_{1}-q_{2}|) \\
				\nonumber
				&=t \sigma_{2}((p_{1},q_{1}),(p_{2},q_{2})),
			\end{align*} which implies the IFS \eqref{IFSB2matrix} is hyperbolic.
		\end{proof}
		
		\begin{theorem}
			Let $G_{1}$ be the attractor of the hyperbolic IFS \eqref{IFSbernGen}. Then, there exists a continuous function $\psi:I \rightarrow R$ that satisfies $\psi(p_{i})=q_{i}, i=1,2,...,N$ and $G_{1}$ is the graph of $\psi.$
		\end{theorem}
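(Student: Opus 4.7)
The plan is to invoke the classical Barnsley--Read--Bajraktarevi\'c machinery for fractal interpolation functions, using the hyperbolicity of the IFS (already established in the preceding lemmas) only indirectly, through a contraction argument on a space of continuous functions. First I would introduce the complete metric space $\mathcal{F} = \{f \in C(I,\mathbb{R}) : f(p_{1}) = q_{1},\ f(p_{N}) = q_{N}\}$ equipped with the supremum metric; it is a closed subset of $C(I,\mathbb{R})$ and hence complete. On $\mathcal{F}$ define the Read--Bajraktarevi\'c operator
\[
(Tf)(p) \;=\; Q_{i}\!\bigl(S_{i}^{-1}(p),\, f(S_{i}^{-1}(p))\bigr), \qquad p \in I_{i}=[p_{i},p_{i+1}],
\]
which makes sense because each $S_{i}$ in \eqref{Si} is an invertible affine contraction from $I$ onto $I_{i}$.

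The first technical check is that $T$ maps $\mathcal{F}$ into $\mathcal{F}$. On each open subinterval $(p_{i},p_{i+1})$ the image $Tf$ is continuous as a composition of continuous functions. At an internal knot $p_{i+1}$ there are two candidate values coming from pieces $i$ and $i+1$, and the endpoint conditions \eqref{EndSi} and \eqref{EndQi} force both one-sided limits to equal $q_{i+1}$; the same conditions yield $(Tf)(p_{1})=q_{1}$ and $(Tf)(p_{N})=q_{N}$. This continuity-at-knots step is the one I expect to be the main obstacle, since it is the only place where the precise form of $V_{i}$ and the endpoint identity \eqref{EndbernGen} for $B_{m}(g,\cdot)$ enter in an essential way.

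Next I would establish that $T$ is a contraction. Because $Q_{i}(p,q)=\alpha_{i}q+V_{i}(p)$ is affine in $q$, on each $I_{i}$ the difference $(Tf)(p)-(Tg)(p)$ equals $\alpha_{i}\bigl(f(S_{i}^{-1}(p))-g(S_{i}^{-1}(p))\bigr)$, whence
\[
\|Tf-Tg\|_{\infty} \;\le\; \Bigl(\max_{1\le i\le N-1}|\alpha_{i}|\Bigr)\,\|f-g\|_{\infty},
\]
with the constant strictly less than $1$ by the choice $|\alpha_{i}|<1$. The Banach fixed point theorem then supplies a unique $\psi\in\mathcal{F}$ with $T\psi=\psi$. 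Evaluating $\psi=T\psi$ at $p=p_{1}$ on each piece and using \eqref{EndQi} inductively shows $\psi(p_{i})=q_{i}$ for every $i=1,\dots,N$.

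Finally I would identify the graph $\Gamma(\psi)=\{(p,\psi(p)):p\in I\}$ with the attractor $G_{1}$. From $w_{i}(p,\psi(p))=(S_{i}(p),Q_{i}(p,\psi(p)))=(S_{i}(p),\psi(S_{i}(p)))$ and the covering $I=\bigcup_{i}S_{i}(I)$, one sees that $\Gamma(\psi)=\bigcup_{i=1}^{N-1}w_{i}(\Gamma(\psi))$; since $\Gamma(\psi)$ is a nonempty compact subset of $I\times\mathbb{R}$ invariant under the Hutchinson operator of \eqref{IFSbernGen}, and hyperbolicity of the IFS guarantees the attractor is the unique such set, we conclude $\Gamma(\psi)=G_{1}$. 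This also confirms that $\psi$ is continuous and interpolates the prescribed data, completing the theorem.
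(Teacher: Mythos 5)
Your proposal is correct and follows essentially the same route as the paper: the same Read--Bajraktarevi\'c operator on the same complete metric space of continuous functions with prescribed endpoint values, with continuity at the interior knots secured by the endpoint conditions \eqref{EndSi} and \eqref{EndQi}, contractivity coming from $|\alpha_{i}|<1$, and the graph identified with the attractor via uniqueness of the invariant compact set. If anything, you supply more detail than the paper, which defers both the contraction estimate and the graph identification to Barnsley's book.
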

		
		\begin{proof}
			Let $\mathcal{F}$ be a complete metric space as given in \cite{Barnsley1988}. Define an operator $K:\mathcal{F} \rightarrow \mathcal{F}$ by
			\begin{align}
				K(\psi)(p) &= Q_{i}(S_{i}^{-1}(p), \psi \circ (S_{i}^{-1}(p))), \,\, p \in I_{i}, \,\, i=1,2,...,N-1.
			\end{align}
			Trivially, $K$ satisfies the endpoint conditions on the space $\mathcal{F},$ for
			\begin{align*}
				(K \psi)(p_{1}) &= Q_{1}(S_{1}^{-1}(p_{1}), \psi \circ (S_{1}^{-1}(p_{1}))) \\ 
				&= Q_{1}(p_{1},q_{1}) \\
				&=q_{1}
			\end{align*}
			
			\begin{align*}
				(K \psi)(p_{N}) &= Q_{N-1}(S_{N-1}^{-1}(p_{N}), \psi \circ (S_{N-1}^{-1}(p_{N}))) \\ 
				&= Q_{N-1}(p_{N},q_{N}) \\
				&=q_{N}
			\end{align*}
			
			To prove $K$ is continuous at each of the commonly shared points, consider the point $p_{i}.$ Considering $p_{i} \in I_{i},$ 
			\begin{align*}
				(K \psi) (p_{i}) &=  Q_{i}(S_{i}^{-1}(p_{i}), \psi \circ (S_{i}^{-1}(p_{i}))) \\ 
				&= Q_{i}(p_{1},q_{1}) \\
				&=q_{1}
			\end{align*}
			Now, considering $p_{i} \in I_{i-1},$
			\begin{align*}
				(K \psi)(p_{i}) &= Q_{i-1}(S_{i-1}^{-1}(p_{i}), \psi \circ (S_{i-1}^{-1}(p_{i}))) \\ 
				&= Q_{i-1}(p_{N},q_{N}) \\
				&=q_{i}
			\end{align*} and $K$ is continuous at the interior of each of the subintervals. Therefore, $K$ is continuous. 
			The contractivity of $K$ can be proved as given in \cite{Barnsley1988}. Then, applying contraction mapping principle, $K$ has a unique fixed point $\psi$ in $\mathcal{F}.$ So, the recursive relation satisfied by the generated FIF is given by
			\begin{align}\label{recursiveb1}
				\psi(p) &= \alpha_{i}\psi(S_{i}^{-1}(p))+g(p)-\alpha_{i}B_{m}(g,S_{i}^{-1}(p))
			\end{align}
			It is trivial that $\psi$ passes through the interpolation points.
			Finally, as given in \cite{Barnsley1988}, it is easier to prove that $G_{1}$ is the graph of the FIF generated. 	
		\end{proof}

		%
		
		\section{Numerical Integration Using Bernstein Polynomial in the Iterated Function System}
		
		Let $M_{1}$ denotes the integral of the fractal interpolation function over the interval $I,$ generated with $m$th Bernstein polynomial in the IFS. Then,
		\begin{align*}
			M_{1} &= \sum_{i=1}^{N-1} \int_{p_{i}}^{p_{i+1}}\psi(p) dp 
		\end{align*}
		Using the recursive relation \eqref{recursiveb1}, 
		\begin{align*}
			M_{1} &= \sum_{i=1}^{N-1} \int_{p_{i}}^{p_{i+1}} \alpha_{i}\psi(S_{i}^{-1}(p))+g(p)-\alpha_{i}B_{m}(g,S_{i}^{-1}(p)) dp
		\end{align*}
		Taking $p^{'}=S_{i}^{-1}(p), $ the above integral becomes,
		\begin{align*}
			M_{1} &= \sum_{i=1}^{N-1} \int_{p_{1}}^{p_{N}} \alpha_{i}a_{i}\psi(p^{'})+a_{i}g(S_{i}(p^{'}))-\alpha_{i}a_{i}B_{m}(g,p^{'}) dp^{'}
		\end{align*}
		By changing the variable of integration, 
		\begin{align*}
			M_{1} &= \sum_{i=1}^{N-1} \int_{p_{1}}^{p_{N}} \alpha_{i}a_{i}\psi(p)+a_{i}g(S_{i}(p))-\alpha_{i}a_{i}B_{m}(g,p) dp \\
			\nonumber
			&=\sum_{i=1}^{N-1}\alpha_{i}a_{i}M_{1}+\sum_{i=1}^{N-1}a_{i} \int_{p_{1}}^{p_{N}} g(S_{i}(p)) dp -\sum_{i=1}^{N-1}\alpha_{i}a_{i}\int_{p_{1}}^{p_{N}} B_{m}(g,p)dp\\
			\nonumber
		\end{align*}
		which implies 
		\begin{align}\label{integral1}
			M_{1} &= \frac{\sum_{i=1}^{N-1}a_{i} \int_{p_{1}}^{p_{N}} g(S_{i}(p)) dp -\sum_{i=1}^{N-1}\alpha_{i}a_{i}\int_{p_{1}}^{p_{N}} B_{m}(g,p)dp}{1-\sum_{i=1}^{N-1}\alpha_{i}a_{i}}
		\end{align}
		
		In particular, the numerical integration formula using first degree Bernstein polynomial in the IFS is given by:
		\begin{align}
			\nonumber
			M_{1} &= \frac{(\frac{p_{N}^{2}-p_{1}^{2}}{2}) \sum_{i=1}^{N-1} a_{i}A_{i} + (p_{N}-p_{1}) \sum_{i=1}^{N-1} a_{i}B_{i}}{1-\sum_{i=1}^{N-1} a_{i}\alpha_{i}} \\& - \frac{\sum_{i=1}^{N-1} \alpha_{i}a_{i} \big(\frac{(p_{N}+p_{1})(q_{N}-q_{1})}{2}+ p_{N}q_{1}-p_{1}q_{N}\big)}{1-\sum_{i=1}^{N-1} a_{i}\alpha_{i}}
		\end{align}\label{Intbern1}
		
		Similarly, numerical integration formula using second degree Bernstein polynomial in the IFS is:
		
		\begin{align}
			\nonumber
			M_{1} &= \frac{\frac{(p_{N}^{3}-p_{1}^{3})}{3}\sum_{i=1}^{N-1} \alpha_{i}a_{i}c_{i}}{1-\sum_{i=1}^{N-1}a_{i}\alpha_{i}} + \frac{(\frac{p_{N}^{2}-p_{1}^{2}}{2})\sum_{i=1}^{N-1} a_{i}(A_{i}-\alpha_{i}d_{i})}{1-\sum_{i=1}^{N-1}a_{i}\alpha_{i}} \\& + \frac{(p_{N}-p_{1})\sum_{i=1}^{N-1}a_{i}(B_{i}-\alpha_{i}e_{i})}{1-\sum_{i=1}^{N-1}a_{i}\alpha_{i}} 
		\end{align}\label{Intbern2}

		\section{Computation Results}
		\begin{example}
			Consider the function $y=1+(3t^{3}+2t^{2}-0.7)\sum_{k=1}^{\infty}\frac{cos(10^{k}\pi t)}{2^{k}}$ over the interval $[-1, 1].$
		\end{example} 
		The actual integral value $C$ of the function is 2.00407. The difference between the numerical integral value calculated using \eqref{Intbern1} and the actual integral value reduces and the error becomes 9.0475e-05 when the interval $[-1,1]$ is divided into 31 number of subintervals. The difference between the integral value with \eqref{Intbern2} and $C$ is 0.0041 at this stage. 
		
		\begin{example}
			Consider the function $y=3t^{2}+2t+0.7-5\sum_{k=1}^{\infty} \frac{sin(6^{k}\pi t)}{2^{k}} $ where $t \in [-1,1].$ 
		\end{example}
		The actual integral value of the function is 3.4. The value of \eqref{Intbern1} is 3.4 with only 5 number of subdivisions of the interval $[-1,1].$ The difference between \eqref{Intbern2} and $C$ becomes 0.1333, at this stage. 
		
		\begin{example}
			Consider the function $y=3t^{2}+2t+0.7-5\sum_{k=1}^{\infty} \frac{sin(8^{k}\pi t)}{2^{k}} $ where $t \in [-1,1].$ 
		\end{example}
		The actual integral value of the function is 3.4. The value of \eqref{Intbern1} is 3.4 with only 5 number of subdivisions of the interval $[-1,1].$ The difference between \eqref{Intbern2} and $C$ becomes 0.1333, at this stage.\\
		The attractors obtained for the first order and second order Bernstein polynomial IFS in each of these examples are given in Figure \ref{fig1}, Figure \ref{fig2} and Figure \ref{fig3}. 
		
		\begin{figure}[th]
			\begin{subfigure}{0.5\textwidth}
				\includegraphics[width=7cm]{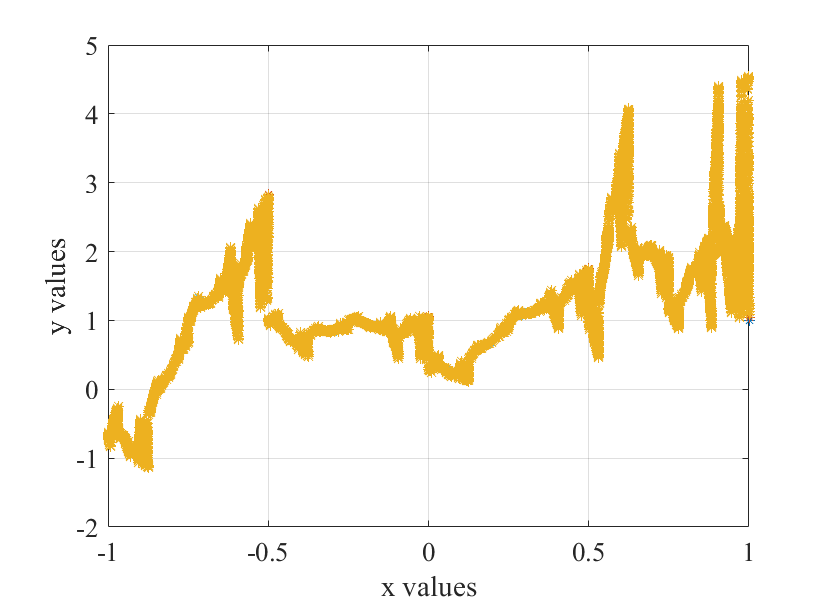}
				\vspace*{8pt}
				\caption{Attractor of \eqref{IFSB1matrix} for Example 5.1}
			\end{subfigure}
			\begin{subfigure}{0.4\textwidth}
					\includegraphics[width=7cm]{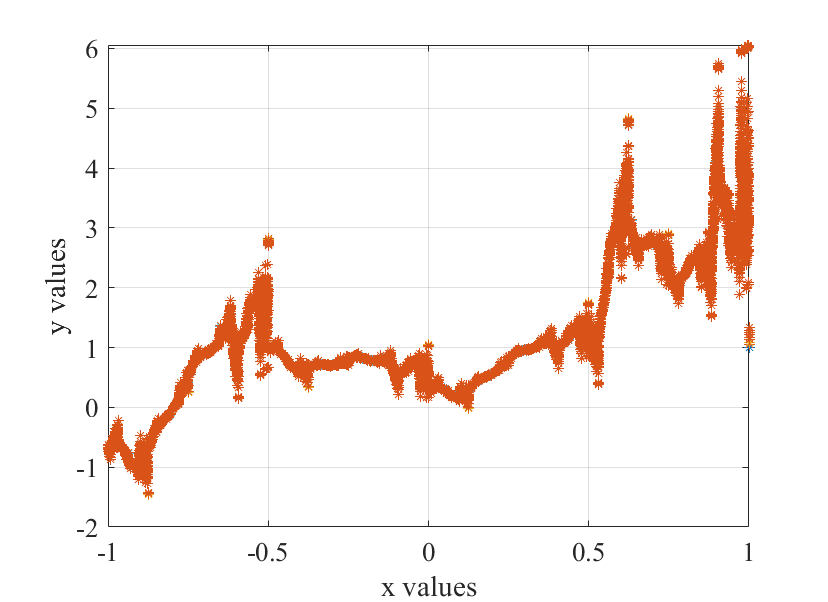}
				\vspace*{8pt}
				\caption{Attractor of \eqref{IFSB2matrix} for Example 5.1}
			\end{subfigure}
			\caption{}
			\label{fig1}
		\end{figure}
		
		\begin{figure}[th]
			\begin{subfigure}{0.5\textwidth}
				\includegraphics[width=7cm]{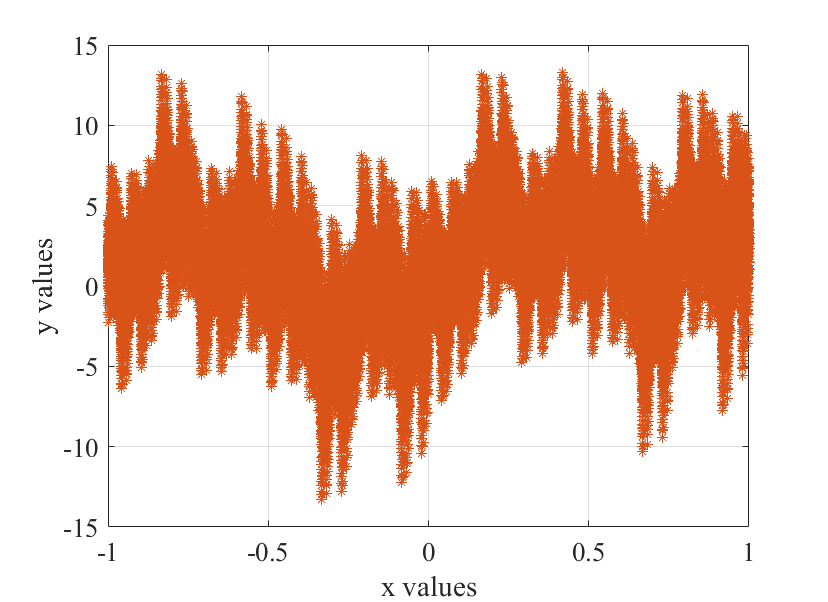}
				\vspace*{8pt}
				\caption{Attractor of \eqref{IFSB1matrix} for Example 5.2}
			\end{subfigure}
			\begin{subfigure}{0.4\textwidth}
				\includegraphics[width=7cm]{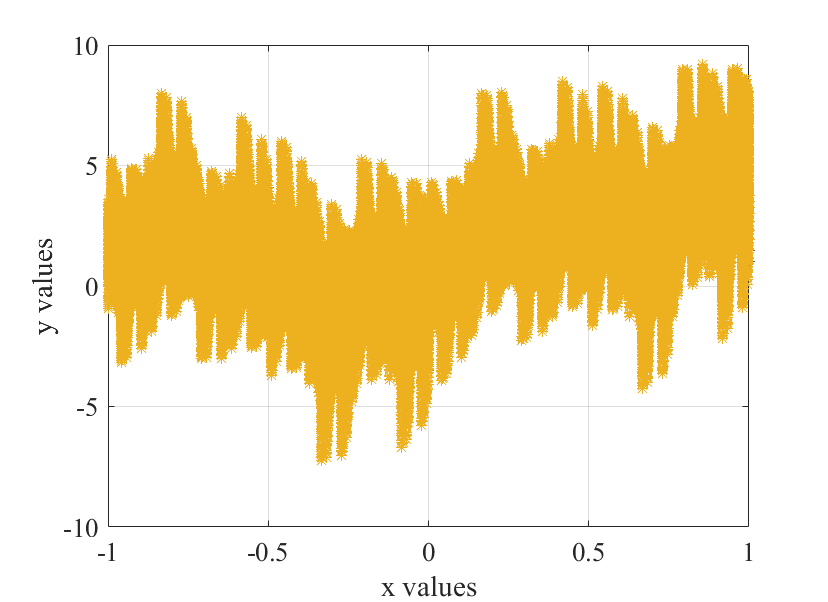}
				\vspace*{8pt}
				\caption{Attractor of \eqref{IFSB2matrix} for Example 5.2}
			\end{subfigure}
			\caption{}
			\label{fig2}
		\end{figure}
		
		\begin{figure}[th]
			\begin{subfigure}{0.5\textwidth}
				\includegraphics[width=7cm]{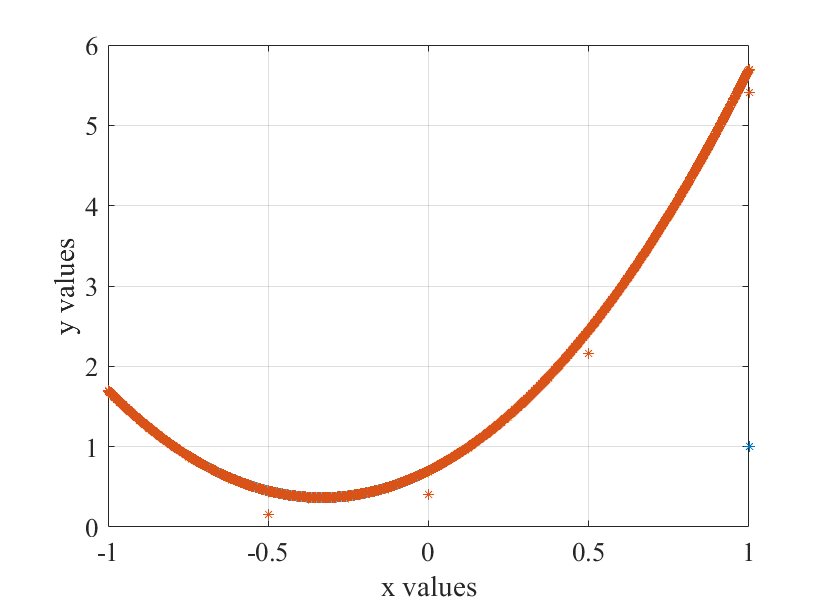}
				\vspace*{8pt}
				\caption{Attractor of \eqref{IFSB1matrix} for Example 5.3}
			\end{subfigure}
			\begin{subfigure}{0.4\textwidth}
				\includegraphics[width=7cm]{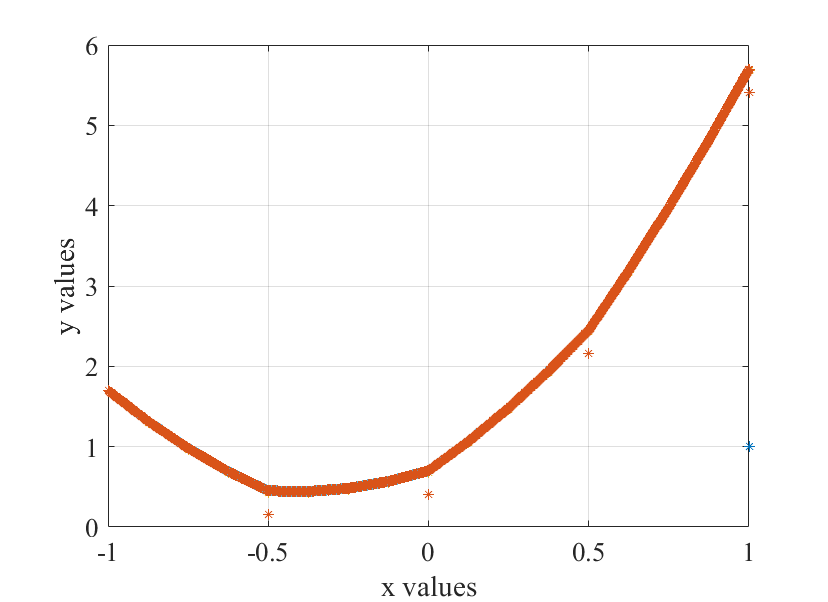}
				\vspace*{8pt}
				\caption{Attractor of \eqref{IFSB2matrix} for Example 5.3}
			\end{subfigure}
			\caption{}
			\label{fig3}
		\end{figure}

		\section{Construction of Bivariate Fractal Interpolation Functions with Bernstein Polynomials in the Iterated Function System}
		Consider a triangular region $D$ with vertices $(x_{1},y_{1}), (x_{2},y_{2}), (x_{3},y_{3}).$ Let the triangle be partitioned into $N$ number of subtriangles according to the partition scheme specified in \cite{Aparna}. Let each of the vertices be assigned with colors as given in Figure 6 in \cite{Aparna}. For the data set $R$ consisting of the points in the partition along with their function values, consider the invertible affine mapping $L_{n}$ as defined in \cite{Aparna}. For the set $F=D\times \mathbf{R},$ define the contractive mappings $F_{n}:F \rightarrow \mathbf{R}$ as $$F_{n}(x,y,z) = \alpha_{n7}z+Q_{n}(x,y)$$ with $$Q_{n}(x,y)= h\circ (L_{n}(x,y))-\alpha_{n7}B_{m}(h,x,y), \,\, n=1,2,...,N$$ where $h$ is the piecewise, interpolation function through the vertices of $D_{n}$ and $B_{m}(h,x,y)$ is the $m$th  Bernstein approximation of $h.$ $\alpha_{n7}$ is the freely chosen vertical scaling factor that varies in between -1 and 1. The endpoint condition of \eqref{EndB2} ensures equation no. 4 in \cite{Aparna}.
		The IFS for the data set $R$ will be then:
		\begin{align}\label{BivIFS}
			w_{n}(x,y,z) &= (L_{n}(x,y), F_{n}(x,y,z)), \text{for} \,\, n=1,2,...,N. 
		\end{align}
		In matrix notation:
		\begin{align}\label{2IFSBernGen}
			w_{n}(x,y,z)= \begin{bmatrix}
				\alpha_{n1} & \alpha_{n2} & 0 \\
				\alpha_{n3} & \alpha_{n4} & 0\\
				f_{n} & g_{n} & \alpha_{n7}  \\
			\end{bmatrix}
			\begin{bmatrix}
				x \\ y \\ z
			\end{bmatrix} +
			\begin{bmatrix}
				\beta_{n1}\\ \beta_{n2} \\ j_{n}-\alpha_{n7}B_{m}(h,x,y)
			\end{bmatrix}
		\end{align}
		
		where $h\circ L_{n}(x,y) = f_{n}x+k_{n}y+j_{n},$ for $n=1,2,...,N.$
		
		\subsection{Bivariate Iterated Function System with First Degree Bernstein Polynomials}
		Putting $m=1$ in \eqref{2IFSBernGen}, the bivariate IFS with first degree Bernstein polynomial will be:
		
		\begin{align}\label{2IFSB1matrix}
			WB1_{n}(x,y,z)= \begin{bmatrix}
				\alpha_{n1} & \alpha_{n2} & 0 \\
				\alpha_{n3} & \alpha_{n4} & 0\\
				f_{n} & k_{n} & \alpha_{n7}  \\
			\end{bmatrix}
			\begin{bmatrix}
				x \\ y \\ z
			\end{bmatrix} +
			\begin{bmatrix}
				\beta_{n1}\\ \beta_{n2} \\ j_{n}-\alpha_{n7}(E_{n}x+G_{n}y+H_{n})
			\end{bmatrix}
		\end{align} with $E_{n}, G_{n}, \,\, H_{n}$ are as given in \eqref{Bern1}, $n=1,2,...,N.$
		
		\subsection{Bivariate Iterated Function System with Second Degree Bernstein Polynomials}
		Using the second degree Bernstein polynomials, the IFS for the bivariate FIF will be:
		
		\begin{align}\label{2IFSB2matrix}
			WB2_{n}(x,y,z)= \begin{bmatrix}
				\alpha_{n1} & \alpha_{n2} & 0 \\
				\alpha_{n3} & \alpha_{n4} & 0\\
				f_{n} & k_{n} & \alpha_{n7}  \\
			\end{bmatrix}
			\begin{bmatrix}
				x \\ y \\ z
			\end{bmatrix} +
			\begin{bmatrix}
				\beta_{n1}\\ \beta_{n2} \\ j_{n}-\alpha_{n7}(K_{n}x^{2}+M_{n}y^{2}+O_{n}xy+P_{n}x+T_{n}y+U_{n})
			\end{bmatrix}
		\end{align} with $K_{n}, M_{n}, O_{n}, P_{n}, T_{n} $ and $U_{n}$ are as given in \eqref{Bern2}, $n=1,2,...,N.$
		
		\begin{lemma}
			The IFS \eqref{2IFSB1matrix} associated with the data set $R$ is hyperbolic with respect to the metric $\lambda_{1}$ defined by $$\lambda_{1}((x_{1},y_{1},z_{1}),(x_{2},y_{2},z_{2}))=|x_{1}-x_{2}|+|y_{1}-y_{2}|+\gamma_{1}|z_{1}-z_{2}|, \text{where}\,\,$$
			\begin{align*}
				\gamma_{1}=min\Big\{ \frac{min\{1-(|\alpha_{n1}|+\alpha_{n3}|):n=1,2,...,N\}-\epsilon_{3}}{max\{|f_{n}|-|\alpha_{n7}||E_{n}|:n=1,2,...,N\}},\\ 
				\frac{min\{1-(|\alpha_{n2}|+\alpha_{n4}|):n=1,2,...,N\}-\epsilon_{3}}{max\{|k_{n}|-|\alpha_{n7}||G_{n}|:n=1,2,...,N\}} \Big\}
			\end{align*} as long as the vertical scaling factor lies in between 0 and 1.
		\end{lemma}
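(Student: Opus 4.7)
The plan is to mimic the structure of the one-dimensional proofs of Lemma 3.1 and Lemma 3.2, but now carrying three coordinates and two ``horizontal'' contributions ($x$ and $y$) instead of one. First I will pick two points $P_1=(x_1,y_1,z_1)$ and $P_2=(x_2,y_2,z_2)$ in $F$, apply the map $WB1_n$ to each, and substitute the images into $\lambda_1$. Using the explicit matrix form in \eqref{2IFSB1matrix} together with the expression for the first-degree Bernstein polynomial $B_1(h,x,y)=E_n x+G_n y+H_n$, the third component of $WB1_n(x,y,z)$ becomes the affine function $(f_n-\alpha_{n7}E_n)x+(k_n-\alpha_{n7}G_n)y+\alpha_{n7}z+(j_n-\alpha_{n7}H_n)$, so the constant terms $\beta_{n1},\beta_{n2},j_n-\alpha_{n7}H_n$ cancel when taking the difference $WB1_n(P_1)-WB1_n(P_2)$.

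Next I will apply the triangle inequality coordinate by coordinate to get
\begin{align*}
\lambda_1\bigl(WB1_n(P_1),WB1_n(P_2)\bigr)
&\leq \bigl(|\alpha_{n1}|+|\alpha_{n3}|+\gamma_1|f_n-\alpha_{n7}E_n|\bigr)|x_1-x_2| \\
&\quad +\bigl(|\alpha_{n2}|+|\alpha_{n4}|+\gamma_1|k_n-\alpha_{n7}G_n|\bigr)|y_1-y_2| \\
&\quad +\gamma_1|\alpha_{n7}||z_1-z_2|,
\end{align*}
and then bound $|f_n-\alpha_{n7}E_n|$ and $|k_n-\alpha_{n7}G_n|$ by the quantities $|f_n|\pm|\alpha_{n7}||E_n|$ and $|k_n|\pm|\alpha_{n7}||G_n|$ that appear in the denominators of $\gamma_1$, so that plugging the explicit definition of $\gamma_1$ forces
\[
|\alpha_{n1}|+|\alpha_{n3}|+\gamma_1|f_n-\alpha_{n7}E_n|\leq 1-\epsilon_3,\qquad
|\alpha_{n2}|+|\alpha_{n4}|+\gamma_1|k_n-\alpha_{n7}G_n|\leq 1-\epsilon_3,
\]
while $|\alpha_{n7}|<1$ by hypothesis.

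Finally, setting $s_n:=\max\{|\alpha_{n1}|+|\alpha_{n3}|+\gamma_1|f_n-\alpha_{n7}E_n|,\;|\alpha_{n2}|+|\alpha_{n4}|+\gamma_1|k_n-\alpha_{n7}G_n|,\;|\alpha_{n7}|\}$ and $s:=\max_{n}s_n$, the preceding display gives $\lambda_1(WB1_n(P_1),WB1_n(P_2))\leq s\,\lambda_1(P_1,P_2)$ with $0<s<1$, uniformly in $n$. This shows each $WB1_n$ is a contraction in $(F,\lambda_1)$, hence \eqref{2IFSB1matrix} is hyperbolic; completeness of $(F,\lambda_1)$ follows from the fact that $\lambda_1$ is equivalent to the Euclidean metric on the closed set $F=D\times\mathbf{R}$.

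The main obstacle is the bookkeeping in the second step: one has to keep track of two independent ``horizontal'' variables (since $L_n$ mixes $x$ and $y$) and simultaneously control the third-coordinate coefficient, which depends on both $x$ and $y$ through the Bernstein term. The careful grouping of $|x_1-x_2|$- and $|y_1-y_2|$-coefficients is what dictates the form of $\gamma_1$ as a minimum of two ratios, and one has to verify that the hypothesis on the scaling factors in $[0,1]$, together with the non-degeneracy of the partitioning triangles (which keeps the denominators positive), makes $\gamma_1$ a well-defined positive number so that the chain of inequalities above actually yields $s<1$.
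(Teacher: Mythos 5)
Your proposal follows essentially the same route as the paper's proof: apply $WB1_{n}$ to two points, cancel the constant terms, use the triangle inequality coordinate-wise, group the $|x_{1}-x_{2}|$-, $|y_{1}-y_{2}|$- and $|z_{1}-z_{2}|$-coefficients, and take the maximum over $n$ to obtain a uniform contraction ratio. The only caveat --- present equally in the paper, whose displayed bound uses $\gamma_{1}(|f_{n}|-|\alpha_{n7}||E_{n}|)$ outright --- is that the triangle inequality gives $|f_{n}-\alpha_{n7}E_{n}|\leq |f_{n}|+|\alpha_{n7}||E_{n}|$, so the minus signs in the stated denominators of $\gamma_{1}$ (which you flag with a ``$\pm$'' but do not resolve) are not actually justified by either argument.
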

		
		\begin{proof}
			$	\lambda_{1}(w_{n}(x_{1},y_{1},z_{1}), w_{n}(x_{2},y_{2},z_{2})) $
			\begin{align*}
				&= (\alpha_{n1}x_{1}+\alpha_{n2}y_{1}+\beta_{n1}, \alpha_{n3}x_{1}+\alpha_{n4}y_{1}+\beta_{n2}, \alpha_{n7}z_{1}+f_{n}x_{1}+k_{n}y_{1}+j_{n}\\&-\alpha_{n7}(E_{n}x_{1}+G_{n}y_{1}+H_{n}), \\
				& \hspace{0.5cm}\alpha_{n1}x_{2}+\alpha_{n2}y_{2}+\beta_{n1}, \alpha_{n3}x_{2}+\alpha_{n4}y_{2}+\beta_{n2}, \alpha_{n7}z_{2}+f_{n}x_{2}+k_{n}y_{2}+j_{n}\\&-\alpha_{n7}(E_{n}x_{2}+G_{n}y_{2}+H_{n})) \\
				&\leq [|\alpha_{n1}|+|\alpha_{n3}|+\gamma_{1}(|f_{n}|-|\alpha_{n7}||E_{n}|)]|x_{1}-x_{2}| +
				[|\alpha_{n2}|+|\alpha_{n4}|\\&+\gamma_{1}(|k_{n}|-|\alpha_{n7}||G_{n}|)]|y_{1}-y_{2}| +
				|\alpha_{n7}|\gamma_{1}|z_{1}-z_{2}|		
			\end{align*}
			Choose 
			\begin{align*}
				i_{n} &= max\{ |\alpha_{n7}|, |\alpha_{n1}|+|\alpha_{n3}|+\gamma_{1}(|f_{n}|-|\alpha_{n7}||E_{n}|), |\alpha_{n2}|+|\alpha_{n4}|+\gamma_{1}(|k_{n}|-|\alpha_{n7}||G_{n}|) \}
			\end{align*} and 
			$$i=max\{i_{n}:n=1,2,...,N\}.$$
			Then, $0<i<1$ and the above expression becomes \\
			
			\begin{align*}
				&[|\alpha_{n1}|+|\alpha_{n3}|+\gamma_{1}(|f_{n}|-|\alpha_{n7}||E_{n}|)]|x_{1}-x_{2}| +
				[|\alpha_{n2}|+|\alpha_{n4}|+\gamma_{1}(|k_{n}|-|\alpha_{n7}||G_{n}|)]|y_{1}-y_{2}| \\& \hspace{0.5cm}+
				|\alpha_{n7}|\gamma_{1}|z_{1}-z_{2}|	\\
				&\leq i [|x_{1}-x_{2}|+|y_{1}-y_{2}|+\gamma_{1}|z_{1}-z_{2}]| \\
				&\leq i\lambda_{1}((x_{1},y_{1},z_{1}),(x_{2},y_{2},z_{2}))
			\end{align*} that implies \eqref{2IFSB1matrix} is hyperbolic.
		\end{proof}
		
		\begin{lemma}
			The IFS \eqref{2IFSB2matrix} for the data set $R$ is hyperbolic with respect to the metric $\lambda_{2}$ defined by
			$$\lambda_{2}((x_{1},y_{1},z_{1}),(x_{2},y_{2},z_{2}))=|x_{1}-x_{2}|+|y_{1}-y_{2}|+\gamma_{2}|z_{1}-z_{2}|, \text{where}\,\,$$
			
			\begin{align*}
				\gamma_{2} = min\Big\{ \frac{min\{1-(|\alpha_{n1}|+\alpha_{n3}|):n=1,2,...,N\}-\epsilon_{3}}{max\{|f_{n}|+|\alpha_{n7}|[|P_{n}|+2|K_{n}||x_{2}|+|O_{n}||y_{3}|]:n=1,2,...,N\}}, \\
				\frac{min\{1-(|\alpha_{n2}|+\alpha_{n4}|):n=1,2,...,N\}-\epsilon_{3}}{max\{|k_{n}|+|\alpha_{n7}|[|T_{n}|+2|M_{n}||y_{3}|+|O_{n}||x_{2}|]:n=1,2,...,N\}}\Big\},
			\end{align*} where the vertical scaling factor is such that $0<\alpha_{n7}<1.$
		\end{lemma}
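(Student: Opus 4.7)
The plan is to mirror the strategy used in the first-degree bivariate case (the preceding lemma) and in the univariate second-degree case (Lemma 3.2), adapting both to handle the quadratic and bilinear terms $K_n x^2$, $M_n y^2$, and $O_n xy$ that appear in $B_2(h,x,y)$. Specifically, I would compute $\lambda_2(WB2_n(x_1,y_1,z_1), WB2_n(x_2,y_2,z_2))$ coordinate by coordinate using the matrix form \eqref{2IFSB2matrix} and bound each coordinate difference with the triangle inequality.

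The first two coordinate differences are immediate: they contribute $|\alpha_{n1}||x_1-x_2|+|\alpha_{n2}||y_1-y_2|$ and $|\alpha_{n3}||x_1-x_2|+|\alpha_{n4}||y_1-y_2|$, exactly as in the first-degree lemma. The real work lies in the third coordinate, whose difference equals $\alpha_{n7}(z_1-z_2)+f_n(x_1-x_2)+k_n(y_1-y_2)-\alpha_{n7}\Delta$, where $\Delta$ is the difference of the second-degree Bernstein polynomial evaluated at the two points. I would expand $\Delta$ via $x_1^2 - x_2^2 = (x_1+x_2)(x_1-x_2)$, $y_1^2-y_2^2 = (y_1+y_2)(y_1-y_2)$, and the asymmetric decomposition $x_1 y_1 - x_2 y_2 = x_1(y_1-y_2) + y_2(x_1-x_2)$, then apply the vertex-based bounds $|x_1+x_2|\leq 2|x_2|$, $|y_1+y_2|\leq 2|y_3|$, $|x_1|\leq |x_2|$, $|y_2|\leq |y_3|$, which hold because all interpolation points lie in the triangular domain whose $x$- and $y$-extents are controlled by $|x_2|$ and $|y_3|$.

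Collecting like terms yields an upper bound of the form $[|\alpha_{n1}|+|\alpha_{n3}|+\gamma_2(|f_n|+|\alpha_{n7}|(|P_n|+2|K_n||x_2|+|O_n||y_3|))]|x_1-x_2|$ plus the analogous coefficient on $|y_1-y_2|$, plus $\gamma_2|\alpha_{n7}||z_1-z_2|$. The definition of $\gamma_2$ is tuned precisely so that each of the first two bracketed coefficients is strictly less than $1$; taking the maximum of those two numbers together with $|\alpha_{n7}|$ (which is below $1$ by hypothesis) produces a uniform constant $i\in(0,1)$ with $\lambda_2(WB2_n(P),WB2_n(Q))\leq i\,\lambda_2(P,Q)$ for every $n$, which is exactly hyperbolicity.

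The main obstacle I anticipate is bookkeeping for the $O_n$ cross-term: the decomposition $x_1 y_1 - x_2 y_2 = x_1(y_1-y_2) + y_2(x_1-x_2)$ distributes the contribution asymmetrically between the $|x_1-x_2|$ and $|y_1-y_2|$ coefficients, so care is needed to match the bounds $|O_n||y_3|$ and $|O_n||x_2|$ that appear in the two denominators of $\gamma_2$. Once this split is chosen consistently with the vertex-bound convention and the quadratic pieces are handled symmetrically, the remainder of the argument reduces to the same triangle-inequality calculation already used in the earlier lemmas.
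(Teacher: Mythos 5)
Your proposal follows essentially the same route as the paper's own proof: bound each coordinate of $\lambda_{2}\big(WB2_{n}(x_{1},y_{1},z_{1}),WB2_{n}(x_{2},y_{2},z_{2})\big)$ by the triangle inequality, absorb the quadratic and cross terms via $x_{1}^{2}-x_{2}^{2}=(x_{1}+x_{2})(x_{1}-x_{2})$, $y_{1}^{2}-y_{2}^{2}=(y_{1}+y_{2})(y_{1}-y_{2})$ and $x_{1}y_{1}-x_{2}y_{2}=x_{1}(y_{1}-y_{2})+y_{2}(x_{1}-x_{2})$ with the vertex bounds $2|K_{n}||x_{2}|$, $2|M_{n}||y_{3}|$, $|O_{n}||y_{3}|$, $|O_{n}||x_{2}|$, and then take the contraction ratio as the maximum of the two bracketed coefficients and $|\alpha_{n7}|$. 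In fact you supply more detail than the paper does (it states the resulting inequality without exhibiting the decomposition), but the argument and the role of $\gamma_{2}$ are identical.
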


		\begin{proof}
			$	\lambda_{2}(w_{n}(x_{1},y_{1},z_{1}), w_{n}(x_{2},y_{2},z_{2})) $
			\begin{align*}
				&\leq& [|\alpha_{n1}|+|\alpha_{n3}|+\gamma_{2}(|f_{n}|+|\alpha_{n7}||P_{n}|+2|\alpha_{n7}||K_{n}||x_{2}|+|\alpha_{n7}||O_{n}||y_{3}|)] |x_{1}-x_{2}| \\
				&+& [|\alpha_{n2}|+|\alpha_{n4}|+\gamma_{2}(|k_{n}|+|\alpha_{n7}||I_{n}|+2|\alpha_{n7}||M_{n}||y_{3}|+|\alpha_{n7}||O_{n}||x_{2}|)] |y_{1}-y_{2}| \\
				&+& \hspace*{-5cm} \gamma_{2}|z_{1}-z_{2}|
			\end{align*}
			Choose 
			\begin{align*}
				l_{n}=max\{|\alpha_{n7}|, [|\alpha_{n1}|+|\alpha_{n3}|+\gamma_{2}(|f_{n}|+|\alpha_{n7}||P_{n}|+2|\alpha_{n7}||K_{n}||x_{2}|+|\alpha_{n7}||O_{n}||y_{3}|)], \\
				[|\alpha_{n2}|+|\alpha_{n4}|+\gamma_{2}(|k_{n}|+|\alpha_{n7}||I_{n}|+2|\alpha_{n7}||M_{n}||y_{3}|+|\alpha_{n7}||O_{n}||x_{2}|)] \}
			\end{align*} and 
			$$l=max\{l_{n}:n=1,2,...,N\}.$$ Then, $0<l<1.$ So, the above expression becomes 
			\begin{align*}
				&\leq l [|x_{1}-x_{2}|+|y_{1}-y_{2}|+\gamma_{1}|z_{1}-z_{2}|] \\
				&\leq l\lambda_{1}((x_{1},y_{1},z_{1}),(x_{2},y_{2},z_{2}))
			\end{align*} that implies \eqref{2IFSB2matrix} is hyperbolic.
		\end{proof}
		
		\begin{theorem}
			Consider the IFS \eqref{2IFSBernGen} with a fixed value of the vertical scaling factor, defined for the data set $R.$ Let $G_{2}$ be the attractor of this IFS. Then, $G_{2}$ is the graph of the unique, continuous function that passes through the given data set $R.$ 
		\end{theorem}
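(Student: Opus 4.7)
The plan is to mirror the one-dimensional argument of Theorem 3.1, replacing intervals by subtriangles and the endpoint compatibility by the vertex-coloring compatibility borrowed from \cite{Aparna}. First I would set up the complete metric space. Let
\begin{align*}
\mathcal{F}_{2} = \{\phi : D \to \mathbf{R} \,\, \text{continuous}, \,\, \phi(v_{j}) = z_{j} \,\, \text{for each interpolation vertex of } D\},
\end{align*}
equipped with the uniform metric. Standard arguments show $\mathcal{F}_{2}$ is a complete metric space. Then I would define the Read--Bajraktarevi\'c operator $T:\mathcal{F}_{2}\to\mathcal{F}_{2}$ by
\begin{align*}
(T\phi)(x,y) = F_{n}\bigl(L_{n}^{-1}(x,y), \,\phi\circ L_{n}^{-1}(x,y)\bigr), \quad (x,y)\in D_{n}, \,\, n=1,\dots,N,
\end{align*}
using the contractive maps $L_{n}$ and $F_{n}$ from the construction preceding \eqref{BivIFS}.

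The next step is to verify that $T$ genuinely lands in $\mathcal{F}_{2}$. The interpolation condition at the three outer vertices of $D$ follows immediately from the vertex values of $L_{n}^{-1}$ at the extremal subtriangles together with the endpoint identity \eqref{EndB2} for $B_{m}(h,\cdot,\cdot)$. The more delicate point is continuity across edges and vertices shared by adjacent subtriangles $D_{n}$ and $D_{n'}$. Here I would invoke the three-coloring of the partition vertices from \cite{Aparna}: a point on a shared edge lies in the image of both $D_{n}$ and $D_{n'}$, and the colors of the preimage vertices guarantee that the values assembled through $F_{n}$ and $F_{n'}$ agree, again using \eqref{EndB2} to cancel the Bernstein terms at the extremes. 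This is the step I expect to be the principal obstacle, since it is the place where the bivariate geometry and the color-coding interact with the Bernstein perturbation.

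Having established $T:\mathcal{F}_{2}\to\mathcal{F}_{2}$, contractivity is quick: for $\phi,\widetilde\phi\in\mathcal{F}_{2}$ and $(x,y)\in D_{n}$,
\begin{align*}
\bigl|(T\phi)(x,y) - (T\widetilde\phi)(x,y)\bigr| = |\alpha_{n7}|\,\bigl|\phi\circ L_{n}^{-1}(x,y) - \widetilde\phi\circ L_{n}^{-1}(x,y)\bigr|,
\end{align*}
so $T$ contracts uniformly with factor $\max_{n}|\alpha_{n7}|<1$ under the hypothesis on the vertical scaling factor. By the Banach contraction mapping theorem, $T$ has a unique fixed point $\Phi\in\mathcal{F}_{2}$, and by construction $\Phi(v_{j}) = z_{j}$ for every interpolation vertex, so $\Phi$ passes through $R$.

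Finally I would identify $G_{2}$ with the graph of $\Phi$. The hyperbolicity of \eqref{2IFSBernGen} (already available from Lemmas 6.1 and 6.2 when $m=1$ or $m=2$, and analogous for general $m$) gives a unique compact attractor $G_{2}\subset D\times\mathbf{R}$. Setting $\widehat{\Phi} = \{(x,y,\Phi(x,y)) : (x,y)\in D\}$, one checks directly from the fixed-point relation that $\bigcup_{n}w_{n}(\widehat{\Phi}) = \widehat{\Phi}$, so $\widehat{\Phi}$ is invariant under the Hutchinson operator; by uniqueness of the attractor, $G_{2} = \widehat{\Phi}$. Uniqueness of the continuous interpolant follows from uniqueness of the fixed point of $T$, completing the proof.
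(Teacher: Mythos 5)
Your proposal follows essentially the same route as the paper: the paper likewise works in the complete metric space of \cite{Aparna}, defines the Read--Bajraktarevi\'c operator $T$, argues well-definedness of $T$ across shared edges of the subtriangles via the partition/coloring scheme and the fixed vertical scaling factor, invokes the contraction mapping principle, and identifies $G_{2}$ with the graph of the fixed point. Your version merely supplies more of the detail that the paper delegates to \cite{Aparna} and \cite{Barnsley1988}; the only point worth making explicit in your edge-matching step is that the scaling factor $\alpha_{n7}$ is the \emph{same} for all $n$, since that (together with \eqref{EndB2} and the coloring) is what the paper relies on for the values from $F_{n}$ and $F_{n'}$ to agree on a common edge.
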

		
		\begin{proof}
			Let $\mathbb{F}$ be a complete metric space as defined in \cite{Aparna}. As specified in \cite{Aparna}, it is easier to show that $L_{n}^{-1}$ is well defined along each common boundary edges of the subtriangles. Now, the definition of $T$ and the fixed value of the vertical scaling factor ensures the well definiteness of $T.$ Hence, $T$ is continuous everywhere. Clearly, $T$ satisfies the endpoint conditions and the contractivity conditions \cite{Aparna}. Hence, as the unique fixed point of $T,$ the recursive relation satisfied by the bivariate FIF is 
			\begin{align}\label{recursiveB}
				f(x,y) &=\alpha_{n7}f(L_{n}^{-1}(x,y))+h(x,y)-\alpha_{n7}B_{m}(h,L_{n}^{-1}(x,y))
			\end{align}	
			Finally, it is easier to establish that $G_{2}$ is the graph of the function $f.$
		\end{proof}
		
		\section{Numerical Double Integration using Bernstein Polynomial in the Iterated Function System}
		Let $M_{2}$ denotes the numerical double integration of bivariate FIF over the triangular region $D,$ constructed with $m$th Bernstein polynomial in the IFS. Then,
		
		\begin{align*}
			M_{2} &= \sum_{n=1}^{N} \iint \limits_{D_{n}} f(x,y) dxdy
		\end{align*}
		With the recursive relation \eqref{recursiveB},
		the double integral becomes,
		\begin{align*}
			M_{2} &=\sum_{n=1}^{N} \iint\limits_{D_{n}} \alpha_{n7}f(L_{n}^{-1}(x,y))+h(x,y)-\alpha_{n7}B_{m}(h,L_{n}^{-1}(x,y))dxdy
		\end{align*}
		putting $(u,v) = L_{n}^{-1}(x,y),$
		\begin{align*}
			M_{2} &= \sum_{n=1}^{N} \iint\limits_{D}[\delta_{n}\alpha_{n7}f(u,v) + \delta_{n}h(L_{n}(u,v)) -\delta_{n}\alpha_{n7}B_{m}(h,u,v)]dudv
		\end{align*}
		changing variable of integration
		\begin{align*}
			M_{2}&=\sum_{n=1}^{N} \iint\limits_{D}[\delta_{n}\alpha_{n7}f(x,y) + \delta_{n}h(L_{n}(x,y)) -\delta_{n}\alpha_{n7}B_{m}(h,x,y)]dxdy\\
			&= \sum_{n=1}^{N}\alpha_{n7}\delta_{n}M_{2}+ \sum_{n=1}^{N}\delta_{n}\iint\limits_{D} h(L_{n}(x,y))dxdy-\sum_{n=1}^{N}\delta_{n}\alpha_{n7}\iint\limits_{D}B_{m}(h,x,y)dxdy
		\end{align*}
		which implies
		\begin{align}
			M_{2} &= \frac{\sum_{n=1}^{N} \delta_{n} \iint \limits_{D} h(L_{n}(u,v)) dudv - \sum_{n=1}^{N} \delta_{n} \alpha_{n7} \iint\limits_{D} B_{m}(h,u,v) dudv}{1-\sum_{n=1}^{N}\delta_{n}\alpha_{n7}}
		\end{align}
		
		With the first degree Bernstein polynomials in the IFS, the integral formula becomes,
		\begin{align}
			M_{2} &= \frac{\sum_{n=1}^{N} \delta_{n} \iint\limits_{D} [(f_{n}-\alpha_{n7}E_{n})x +(k_{n}-\alpha_{n7}G_{n})y + (j_{n}-\alpha_{n7}H_{n})] dxdy}{1-\sum_{n=1}^{N}\delta_{n}\alpha_{n7}}
		\end{align}
		Using the second degree Bernstein polynomials in the IFS, this becomes,
		\begin{align}
			M_{2} &= \frac{ \sum_{n=1}^{N}\delta_{n} \iint \limits_{D} [(f_{n}-\alpha_{n7}P_{n})x+(k_{n}-\alpha_{n7}T_{n})y+(j_{n}-\alpha_{n7}U_{n})}{1-\sum_{n=1}^{N}\delta_{n}\alpha_{n7}}\\ &-\frac{ \sum_{n=1}^{N}\delta_{n} \iint \limits_{D}[\alpha_{n7}K_{n}x^{2}-\alpha_{n7}M_{n}y^{2}-\alpha_{n7}O_{n}xy]dxdy}{1-\sum_{n=1}^{N}\delta_{n}\alpha_{n7}}
		\end{align}
		
		\section{Computation Results}
		\begin{example}
			Consider the function $f(x,y)= ye^{2.5x+0.6y}$ where $(x,y) $ lies in the triangular domain with vertices $(0,0), (1,0)$ and $(0.5,1).$
		\end{example}	
		Suppose this domain is partitioned into 27 number of subtriangles and each vertices are colored as  defined in \cite{Aparna}, the following data set will be generated:
		$$
		\big\{(0,0), (0.25,0), (0.5,0), (0.75,0), (1,0), (0.12, 0.25),$$$$ \hspace{1.2cm}(0.31, 0.25), (0.5, 0.25), (0.68,0.25), (0.87, 0.25),	(0.25,0.5), $$$$ \hspace{1.3cm}(0.375, 0.5), (0.5,0.5), (0.625, 0.5), (0.75,0.5),
		(0.375,0.75),$$$$\hspace{1.3cm} (0.43, 0.75), (0.5,0.75), (0.56, 0.75), (0.625, 0.75),
		(0.5, 1)\big\}
		$$
		Then, the functions in the IFS \eqref{2IFSB1matrix} will be:
		$$ \hspace{-5cm}
		\big\{(0.25x,0.25y,0.001z+0.39y  ), 
		...,$$$$
		(-0.125x+0.52, 0.25y+0.75, 0.001z-1.29x+2.19y+4.80)\big\}
		$$
		
		The IFS \eqref{2IFSB2matrix} will be:
		$$\hspace{-3.4cm}
		\big\{(0.25x, 0.25y, 0.001z+0.3964y-0.0034y^{2}-0.0009xy),...,$$$$
		(-0.125x+0.56, 0.25y+0.75, 0.001z-1.04x+2.3y+4.69-0.26x^{2}-0.062y^{2}+0.01xy\big\}	$$
		
		The integral results obtained with these IFS are tabulated in Table \ref{tab1} and the attractors are plotted in Figure \ref{fig4}.
		
		\begin{table}[th]
			\caption{Comparison table for Example 8.1: $d$=No of subdivisions, $N$=No of subtriangles, $M_{1}$= Fractal double integral value with \eqref{2IFSB1matrix}, $M_{2}$= Fractal double integral value with \eqref{2IFSB2matrix},  $I$=Actual double integral value.
				\label{tab1}}
			{\begin{tabular}{|c|c|c|c|c|c|c|} \hline
					$d$   & $N$       & I & $M_{1}$ & $M_{1}-I$ & $M_{2}$ & $M_{2}-I$  \\ \hline
					4\hphantom{00}   &\hphantom{0} 27 &\hphantom{0} 0.8502 &\hphantom{0} 0.8465 &\hphantom{0} -0.0037 &\hphantom{0}0.8370 &\hphantom{0} -0.0132 \\ \hline
				10\hphantom{00}   &\hphantom{0} 183 &\hphantom{0} 0.8502 &\hphantom{0} 0.8490 &\hphantom{0} -0.0012 &\hphantom{0} 0.7699  &\hphantom{0} -0.0803	\\	\hline
			\end{tabular} }
		\end{table}
		
		\begin{figure}[th]
			\begin{subfigure}{0.5\textwidth}
				\includegraphics[width=7cm]{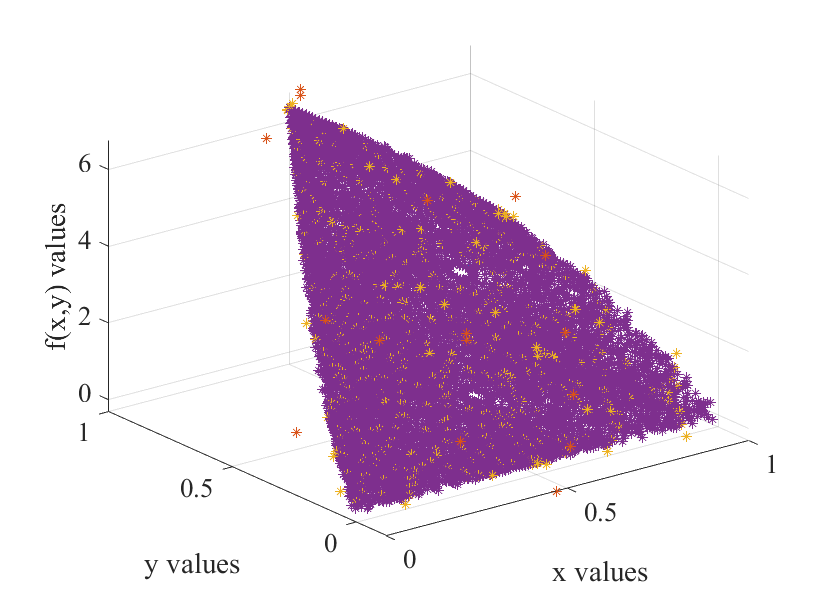}
				\vspace*{8pt}
				\caption{Attractor of \eqref{2IFSB1matrix} for Example 8.1}
			\end{subfigure}
			\begin{subfigure}{0.4\textwidth}
				\includegraphics[width=7cm]{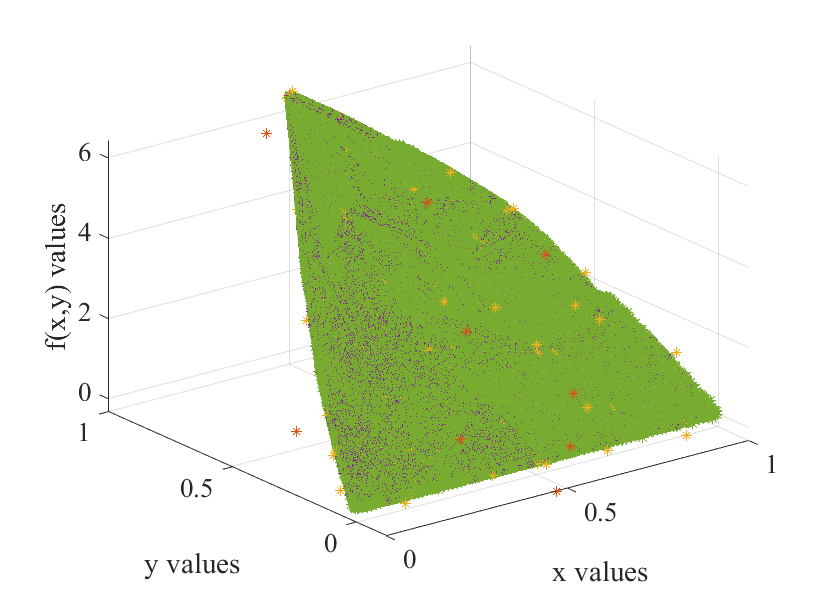}
				\vspace*{8pt}
				\caption{Attractor of \eqref{2IFSB2matrix} for Example 8.1}
			\end{subfigure}
			\caption{}
			\label{fig4}
		\end{figure}

				\begin{example}
					Consider the function $f(x,y)= \frac{cos(\pi x/4)+sin(\pi y/4)}{\sqrt[3]{x^{3}+y^{3}}}$ where $(x,y) $ lies in the triangular domain with vertices $(1.5,1.5), (2,1.5)$ and $(1.75,2).$
				\end{example}
				The data set for this function when the triangle is partitioned into 27 parts:
				$$
				\big\{(1.5,1.5), (1.625,1.5), (1.75,1.5), (1.875,1.5), (2,1.5), (1.56,1.625),  $$$$
				\hspace{-0.4cm}	(1.65,1.62), (1.75,1.62), (1.84,1.62), (1.93,1.62), (1.62,1.75), $$$$ 
				(1.68,1.75), (1.75,1.75), (1.81,1.75), (1.875,1.75), (1.68,1.875),$$$$  (1.71,1.875), (1.75,1.875), (1.78,1.875), (1.81,1.875) (1.75,2)\big\}	
				$$
				The IFS \eqref{2IFSB1matrix} will be:
				$$
				\big\{(0.25x+1.125, 0.25y+1.125, 0.001z-0.15x-0.01y+0.94),
				...,$$$$
				\hspace{0.3cm}	(-0.125x+1.96, 0.25y+1.5, 0.001z+0.059x-0.03y+0.46)\big\}$$
				and the IFS \eqref{2IFSB2matrix} is:
				$$
				\big\{(0.25x+1.12, 0.25y+1.125, 0.001z-3.6x-27y+47.9+11.65x^{2}+9.73y^{2}-1.99xy),...,$$$$ \hspace{0.2cm}
				(-0.125x+1.96, 0.25y+1.5, 0.001z-400.9x-84y+143.3+117.18
				x^{2}+24.13y^{2}-4.92xy)\big\}$$
				Table \ref{tab2} shows the integral results and the attractors are displayed in Figure \ref{fig5}.
				
				\begin{table}[th]
					\caption{Comparison table for Example 8.2: $d$=No of subdivisions, $N$=No of subtriangles, $M_{1}$= Fractal double integral value with \eqref{2IFSB1matrix}, $M_{2}$= Fractal double integral value with \eqref{2IFSB2matrix},  $I$=Actual double integral value.\label{tab2}
					}
					{\begin{tabular}{|c|c|c|c|c|c|c|} \hline
							$d$   & $N$       & I & $M_{1}$ & $M_{1}-I$ & $M_{2}$ & $M_{2}-I$  \\ \hline
							4\hphantom{00}   &\hphantom{0} 27 &\hphantom{0} 0.0672 &\hphantom{0} 0.0671 &\hphantom{0} -1.1093e-04 &\hphantom{0}0.3944 &\hphantom{0} 0.3272 \\
							10\hphantom{00}   &\hphantom{0} 183 &\hphantom{0} 0.0672 &\hphantom{0} 0.0672 &\hphantom{0} -7.1178e-05 &\hphantom{0}2.9716 &\hphantom{0} 2.9044 \\
							\hline
					\end{tabular} }
				\end{table}
				
				\begin{figure}[th]
					\begin{subfigure}{0.4\textwidth}
						\includegraphics[width=7cm]{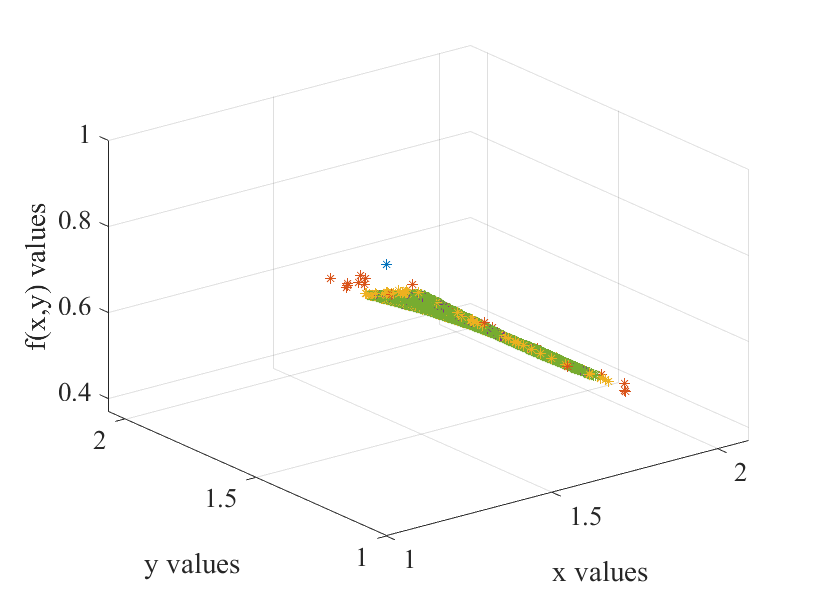}
						\vspace*{8pt}
						\caption{Attractor of \eqref{2IFSB1matrix} for Example 8.2}
					\end{subfigure}
					\begin{subfigure}{0.4\textwidth}
						\includegraphics[width=7cm]{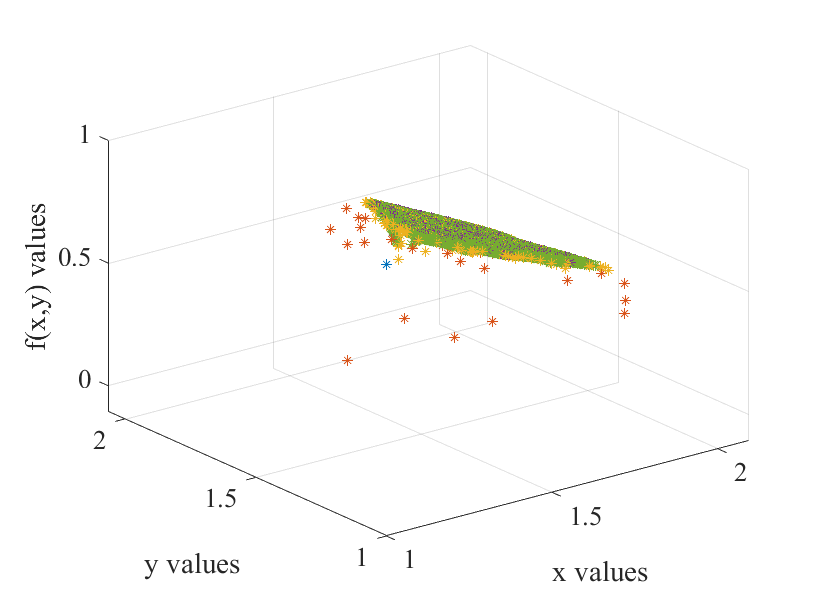}
						\vspace*{8pt}
						\caption{Attractor of \eqref{2IFSB2matrix} for Example 8.2}
					\end{subfigure}
					\caption{}
					\label{fig5}
				\end{figure}

						\begin{example}
							Consider the test function \begin{align*}
								f(x,y)=0.26(x^{2}+y^{2})-0.48xy \,\,\, 
							\end{align*} where $(x,y)$ lies in the triangular region with vertices $(-10,-10), \,\, (10,-10) $ and $(0,10).$
						\end{example}
						In accordance with the above mentioned partition scheme, the data set with 27 subtriangles in the partition is:
						$$
						\big\{(-10,-10, 4), (-5,-10,8.5),(0,-10,26),(5,-10,56.5),(10,-10,100), $$$$
						\hspace{0.8cm} (-7.5,-5,3.12),  (0,-5,6.5),    (3.75,-5,19.16),    (7.5,-5,39.13),(-5,0,6.5), $$$$ \hspace{-0.8cm}
						(-2.5,0,1.63), (0,0,0),   (2.5,0,1.63), (5,0,6.5),(-2.5,5,14.13),  $$$$
						\hspace{-0.5cm} (-1.25,5,9.91),   (0,5,6.5),    (1.25,5,3.91),    (2.5,5,2.13),	(0,10,26)\big\}
						$$		The IFS \eqref{2IFSB1matrix} is 
						$$	\big\{(0.25x-7.5, 0.25y-7.5, 0.001z+0.2241x-0.1556y+4.6807),...,$$$$ \hspace{-0.4cm}
						(-0.125x, 0.25y+7.5, 0.001z+0.3024x+0.9509y+16.4653)\big\}$$
						The IFS \eqref{2IFSB2matrix} is:
						$$\hspace{-5cm}\big\{(0.25x-7.5, 0.25y-7.5, 0.001z-0.0005x^{2}),...,$$$$
						(-0.125x, 0.25y+7.5, 0.001z+(1.0e+04)*.002x+(1.0e+04)*.002+0.0139x^{2})\big\}$$
						
						\begin{table}[th]
							\caption{Comparison table for Example 8.3: $d$=No of subdivisions, $N$=No of subtriangles, $M_{1}$= Fractal double integral value with \eqref{2IFSB1matrix}, $M_{2}$= Fractal double integral value with \eqref{2IFSB2matrix},  $I$=Actual double integral value.\label{tab3}}
							{\begin{tabular}{|c|c|c|c|c|c|c|} \hline
									$d$   & $N$       & I & $M_{1}$ & $M_{1}-I$ & $M_{2}$ & $M_{2}-I$  \\ \hline
									4\hphantom{00}   &\hphantom{0} 27 &\hphantom{0} 2600 &\hphantom{0} 3.0723e+03 &\hphantom{0} 472.2672 &\hphantom{0} 3.4592e+06 &\hphantom{0} 3.4566e+06 \\
									10\hphantom{00}   &\hphantom{0} 183 &\hphantom{0} 2600 &\hphantom{0} 2.6777e+03  &\hphantom{0} 77.7029 &\hphantom{0} 2.2021e+07 &\hphantom{0} 2.2018e+07 \\
									49\hphantom{00}   &\hphantom{0} 4707 &\hphantom{0} 2600 &\hphantom{0} 2.6047e+03  &\hphantom{0} 4.6640 &\hphantom{0} 5.5734e+08
									&\hphantom{0} 5.5733e+08
									\\
									\hline
							\end{tabular} }
						\end{table}
						
						\begin{figure}[th]
							\begin{subfigure}{0.5\textwidth}
								\includegraphics[width=7cm]{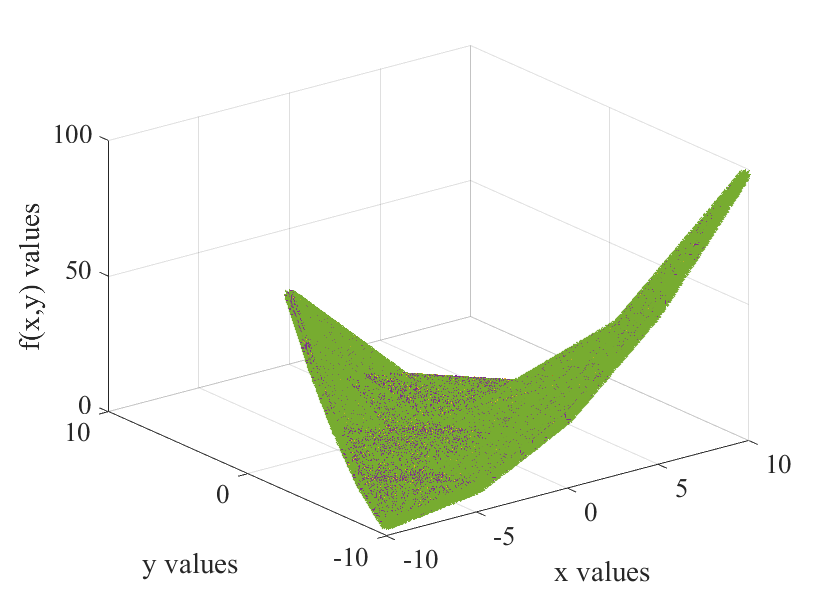}
								\vspace*{8pt}
								\caption{Attractor of \eqref{2IFSB1matrix} for Example 8.3}
							\end{subfigure}
							\begin{subfigure}{0.4\textwidth}
								\includegraphics[width=7cm]{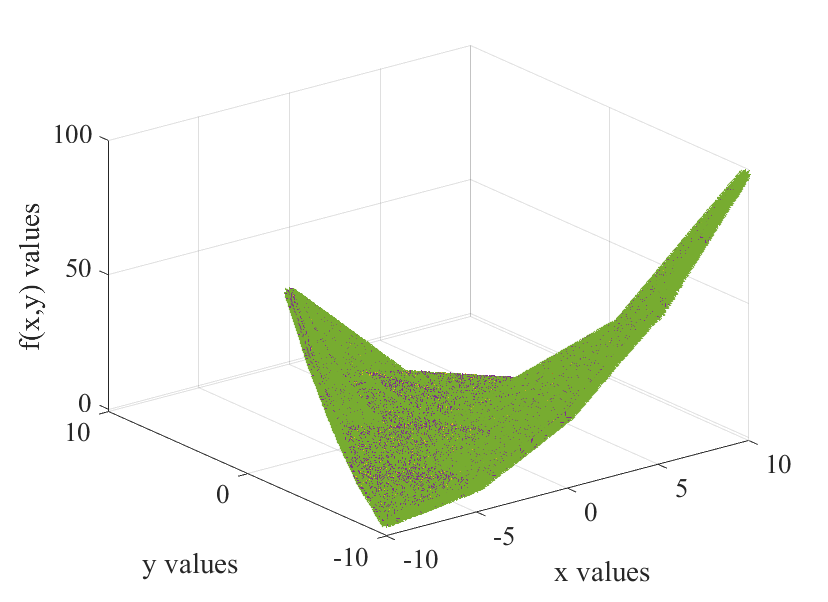}
								\vspace*{8pt}
								\caption{Attractor of \eqref{2IFSB2matrix} for Example 8.3}
							\end{subfigure}
							\caption{}
							\label{fig6}
						\end{figure}
						The obtained integral results are given in Table \ref{tab3} and the attractors are plotted in Figure \ref{fig6}. 
						
						\begin{example}
							Consider another test function \begin{align*}
								f(x,y)=(x^{2}+y-11)^{2}+(x+y^{2}-7)^{2} \,\,\, 
							\end{align*} where $(x,y)$ lies in the triangular region with vertices $(-5,-5), \,\, (5,-5) $ and $(0,5).$
						\end{example}
						In accordance with the above mentioned partition scheme, the data set with 27 subtriangles in the partition is:
						$$
						\big\{(-5,-5),(-2.5,-5),(0,-5),(2.5,-5),(5,-5),
						(-3.75,-2.5),$$$$ \hspace{0.3cm}(-1.875,-2.5), (0,-2.5), (1.875,-2.5), (3.75,-2.5),
						(-2.5,0), $$$$(-1.25,0), (0,0), (1.25,0), (2.5,0)
						(-1.25,2.5), (-0.625,2.5), $$$$\hspace{-3.2cm}(0,2.5), (0.625,2.5), (1.25,2.5),(0,5)\big\}$$
						
						The IFS \eqref{2IFSB1matrix} is 
						$$
						\big\{(0.25x-3.75, 0.25y-3.75, 0.001z+8.49x-27.1001y+156.73),
						...,$$$$
						\hspace{-1cm}(-0.125x, 0.25y+3.75, 0.001z+0.18x+29.21y+213.58)\big\}$$
						
						The IFS \eqref{2IFSB2matrix} is:
						$$
						\big\{(0.25x-3.7, 0.25y-3.7, 0.001z+7.86x-27.3y+155.19-0.09x^{2}-0.02y^{2}+0.01xy),
						...,$$$$
						\hspace{0.2cm}	(-0.125x, 0.25y+3.75, 0.001z+0.23x+29.61y+212.94-0.07x^{2}-0.05y^{2}-0.02xy)\big\}$$

						\begin{table}[th]
							\caption{Comparison table for Example 8.4: $d$=No of subdivisions, $N$=No of subtriangles, $M_{1}$= Fractal double integral value with \eqref{2IFSB1matrix}, $M_{2}$= Fractal double integral value with \eqref{2IFSB2matrix},  $I$=Actual double integral value.\label{tab4}}
							{\begin{tabular}{|c|c|c|c|c|c|c|} \hline
									$d$   & $N$       & I & $M_{1}$ & $M_{1}-I$ & $M_{2}$ & $M_{2}-I$  \\ \hline
									4\hphantom{00}   &\hphantom{0} 27 &\hphantom{0} 7625 &\hphantom{0} 9.8090e+03 &\hphantom{0} 2.1840e+03 &\hphantom{0} 9.6395e+03 &\hphantom{0} 2.0145e+03  \\
									10\hphantom{00}   &\hphantom{0} 183 &\hphantom{0} 7625 &\hphantom{0}  7.9803e+03  &\hphantom{0} 355.2784 &\hphantom{0} 7.2675e+03 &\hphantom{0} -357.4685 \\
									\hline
							\end{tabular} }
						\end{table}
						
						\begin{figure}[th]
							\begin{subfigure}{0.5\textwidth}
								\includegraphics[width=7cm]{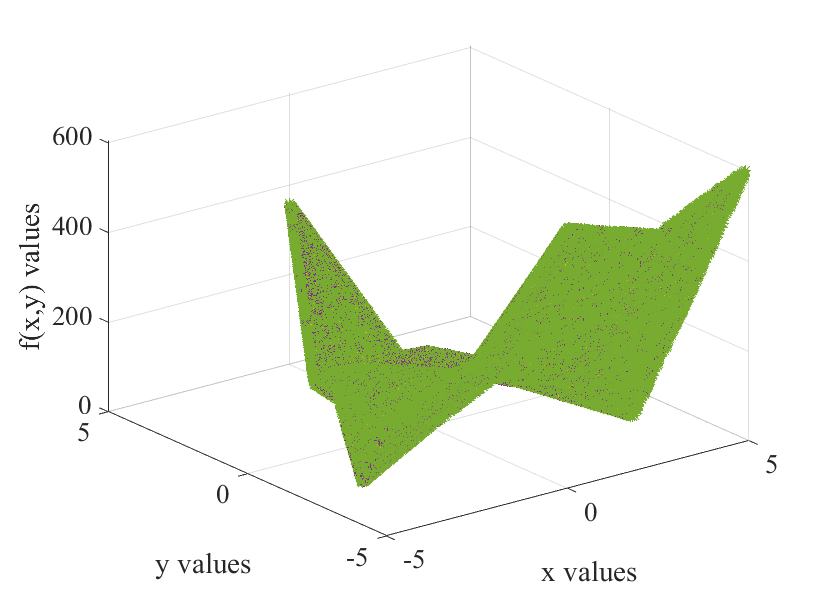}
								\vspace*{8pt}
								\caption{Attractor of \eqref{2IFSB1matrix} for Example 8.4}
							\end{subfigure}
							\begin{subfigure}{0.4\textwidth}
								\includegraphics[width=7cm]{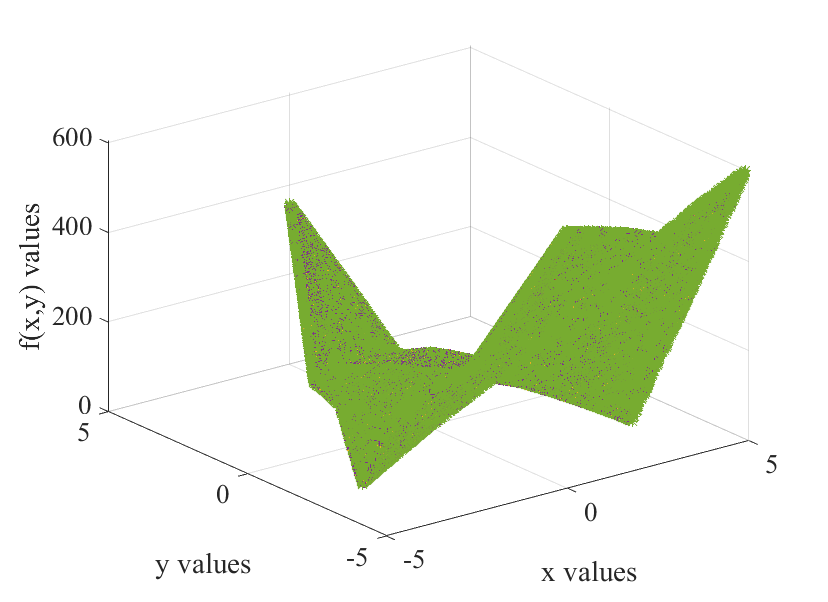}
								\vspace*{8pt}
								\caption{Attractor of \eqref{2IFSB2matrix} for Example 8.4}
							\end{subfigure}
							\caption{}
							\label{fig7}
						\end{figure}
						
						The integral values are provided in Table \ref{tab4} with the attractors in Figure \ref{fig7}.
						It is to be noted that the integral values are converging towards 0 as the number of subtriangles is increased. However, for the above two test functions (Examples 8.3 and 8.4), the actual convergence to 0 can be verified with the help of high performing computers.

						\section{Conclusion}
						With the help of Bernstein polynomials, this paper introduces the approximation of bivariate fractal interpolation functions over triangular interpolating domain. The considered interpolating domain is partitined in a specific manner and the vertices are given colors such that the chromatic number of the partition is 3. The contractive mappings in the IFS consists of $m$th Bernstein polynomials. The coefficients of the IFS are used to formulate the numerical double integration formula for the bivariate signals. The numerical integration of univariate signals is carried out using the coefficients of the Bernstein polynomial IFS defined over closed intervals. The accuracy of both the integration formulae is verified through a couple of test functions. Analysing the results obtained, it has been observed that the IFS with the first degree Bernstein polynomial exhibits better converging tendency as compared with the higher degree Bernstein polynomials.

\end{document}